\newtheorem{theorem}{Theorem}[section]
\newtheorem{lemma}{Lemma}[section]
\newtheorem{corollary}{Corollary}[section]
\newtheorem{proposition}{Proposition}[section]
\newtheorem{proposition-definition}{Proposition-Definition}[section]
\theoremstyle{definition}
\newtheorem{definition}{Definition}[section]
\title{Endomorphisms of a certain ring of rational functions}
\author{by Milo Moses}
\begin{document}
\maketitle

\tableofcontents

\newcommand{\Vect}{\mathrm{Vect}}
\newcommand{\RR}{\mathbb{R}}
\newcommand{\HH}{\mathbb{H}}
\newcommand{\NN}{\mathbb{N}}
\newcommand{\QQ}{\mathbb{Q}}
\newcommand{\CC}{\mathbb{C}}
\newcommand{\ZZ}{\mathbb{Z}}
\newcommand{\LL}{\mathcal{L}}
\newcommand{\MM}{\mathcal{M}}
\newcommand{\MMsemi}{\mathcal{M}^{\mathrm{semi}}}
\newcommand{\Aut}{\mathrm{Aut}}
\newcommand{\rad}{\mathrm{rad}}
\newcommand{\Hom}{\mathrm{Hom}}
\newcommand{\Fpbar}{\overline{\mathbb{F}}_{p}}
\newcommand{\Fp}{\mathbb{F}_{p}}
\newcommand{\Gal}{\mathrm{Gal}}
\newcommand{\im}{\mathrm{im}\,}
\newcommand{\Frob}{\mathrm{Frob}}
\newcommand{\End}{\mathrm{End}}
\newcommand{\modd}{\,\,\mathrm{mod}}
\newcommand{\lcm}{\mathrm{lcm}}
\newcommand{\st}{\,\,\mathrm{st}\,\,}

\section{Introduction}

For any field $K$, define the ring $\MM_K$ as follows:

\begin{equation*}
\begin{pmatrix}
  \text{elements:  ratios of monic polynomials in $K(x)$}\\
  \text{addition: multiplication of rational functions}\\
  \text{multiplication: }\left[\prod_{i}(x-\alpha_i)\right]\cdot \left[\prod_{j}(x-\beta_j)\right]=\left[\prod_{i,j}(x-\alpha_i\beta_j)\right]
\end{pmatrix}
\end{equation*}

This sort of ring can naturally arise in the study of vector spaces. Endomorphisms of vector spaces have associated characteristic polynomials, and the direct sum/tensor product of endomorphisms corresponds to addition/multiplication in $\MM_K$. We will put a topology on $\MM_K$ by letting Cauchy sequences be those in which the leading coefficients converge. We will later show that it makes $\MM_K$ into a topological ring.  The following is the main theorem proved in this text:

\begin{theorem}\label{main} Let $K/\QQ$ be an algebraic extension, and let $\phi: \MM_K\to \MM_K$ be a continuous ring endomorphism. Then, either

\begin{enumerate}
\item There exist integers $k,s\geq 0$ and $\sigma\in \Gal(K/\QQ)$ such that

$$\phi\left(\prod_{i}(x-\alpha_i)\right)=x^{\sum_{i}\alpha_i^s-1}\cdot \prod_{i}\left(x-\sigma(\alpha_i)^k\right)$$

\item All roots of rational functions in the image of $\phi$ are roots of unity. If $K/\QQ$ is finite, then there exists $N$ such that all the roots are $N$th roots of unity.
\end{enumerate}
\end{theorem}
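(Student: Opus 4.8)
The plan is to reconstruct $\phi$ from the family of ``ghost'' homomorphisms and the internal idempotent structure of $\MM_K$. For each $s\ge 0$ let $p_s\colon\MM_K\to K$ send $\prod_i(x-\alpha_i)^{n_i}$ to $\sum_i n_i\alpha_i^{\,s}$ (so $p_0=\deg$, the difference of the numbers of zeros and poles); each $p_s$ is a ring homomorphism, the collection is jointly injective, and the topology is the initial topology they define, so $\phi$ is continuous exactly when every $p_s\circ\phi$ is. I would first record that $x=(x-0)$ is idempotent with complementary idempotent $e=\frac{x-1}{x}$, giving a decomposition $\MM_K\cong\ZZ\langle x\rangle\times e\MM_K$ in which $\ZZ\langle x\rangle\cong\ZZ$ and $e\MM_K$ is the subring of ``rational big Witt vectors'' $\prod_i(1-\alpha_i t)^{-n_i}$, with $\oplus$ and $\otimes$ realized as Witt addition and multiplication. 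A short analysis of when the Witt--multiplicative inverse of such a vector is again rational (its ghost sequence must have a single frequency, and the multiplicity must then be $\pm 1$) shows that the units of $\MM_K$ are precisely the elements all of whose zeros and poles lie in $\{0\}\cup\{\lambda\}$ for a single $\lambda\in K^{\times}$, and dually that $1,\,x,\,\frac{x-1}{x},\,x-1$ are the only idempotents.

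Since $\phi$ is a ring homomorphism it preserves idempotents and units. Each $(x-\alpha)$ with $\alpha\in K^{\times}$ is a unit (with $\otimes$-inverse $(x-\alpha^{-1})$), so $\phi(x-\alpha)$ is a unit; calling $\lambda_\alpha\in K^{\times}$ its unique nonzero zero-or-pole, the relations $(x-\alpha)\otimes(x-\beta)=(x-\alpha\beta)$ and $(x-\alpha)^{\otimes n}=(x-\alpha^{n})$ force $\alpha\mapsto\lambda_\alpha$ to be a group endomorphism of $K^{\times}$ and the remaining discrete data (whether $\lambda_\alpha$ is a zero or a pole, and $\deg\phi(x-\alpha)$) to be quadratic characters of $K^{\times}$. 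Continuity now enters twice. First, letting $\alpha$ run through a null sequence forces $(x-\alpha)\to x$, hence $\phi(x-\alpha)\to\phi(x)$, which excludes negative exponents; together with a Gelfond--Schneider transcendence argument this identifies $\alpha\mapsto\lambda_\alpha$ with $\alpha\mapsto\sigma(\alpha)^{k}$ for an automorphism $\sigma\in\Gal(K/\QQ)$ and an integer $k\ge 0$. Second, applying $\phi$ to higher-degree elements such as $(x^{2}-\beta)$ and using that $\phi(x^{2}-\beta)$ must be a genuine element of $\MM_K$ (so the required $\otimes$-square roots exist, forcing certain power sums to stay rational) kills the quadratic-character ambiguities, pinning $\phi(x-\alpha)$ to the stated shape with $\lambda_\alpha$ a zero. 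The degree component $\deg\circ\phi$ is itself a continuous ring homomorphism $\MM_K\to\ZZ$; classifying these the same way identifies it with a power-sum map $p_s$, which is exactly the source of the $x^{\sum_i\alpha_i^{s}-1}$ prefactor.

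With $\phi$ determined on all linear factors, it remains to extend to $\MM_K$. If $\lambda_\alpha=\sigma(\alpha)^{k}$ with $k\ge 1$, then on the (topologically dense) subring generated by linear factors $\phi$ agrees with the map $\prod_i(x-\alpha_i)\mapsto x^{\sum_i\alpha_i^{s}-1}\prod_i(x-\sigma(\alpha_i)^{k})$, and continuity propagates this to all of $\MM_K$, yielding alternative~(1). If instead $k=0$, then $\lambda_\alpha=1$ for every $\alpha$, so every $\phi(x-\alpha)$ has all its zeros in $\{0,1\}$; bootstrapping through the roots of unity $(x-\zeta)$---which $\phi$ must send to torsion units of $\MM_K$, and by the classification of units these are built only from roots of unity---shows that every element of $\im\phi$ has all its zeros among roots of unity, and when $K/\QQ$ is finite a uniform degree bound on the relevant torsion units gives the single $N$ in alternative~(2).

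The main obstacle will be the global coherence in the middle step: the local analysis of a single $\phi(x-\alpha)$ only gives a unit, and the real work is to extract from continuity (and from $\phi$ landing back inside $\MM_K$ on higher-degree elements) that the ``frequency'' $k$ is the same for every $\alpha$, that the quadratic-character twists vanish, and that the continuous group endomorphism $\alpha\mapsto\lambda_\alpha$ of $K^{\times}$ with algebraic values is rigid enough to be $\alpha\mapsto\sigma(\alpha)^{k}$ rather than something wilder---this is where the transcendence input is essential. A secondary difficulty is that $\phi$ need not preserve $\MM_{\QQ}$, so the appeal to Kronecker's theorem (to conclude that zeros of modulus $\le 1$ occurring in $\im\phi$ are roots of unity) must be routed through the $\otimes$-power relations $(x-\alpha)\mapsto(x-\alpha^{n})$ rather than through naive Galois stability.
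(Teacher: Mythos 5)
Your strategy (idempotents, units, and the Witt/ghost structure) is genuinely different from the paper's, but it has two gaps that are fatal as written. The first is the topology: in this paper $K$ carries the \emph{discrete} topology and $\MM_K$ the subspace topology inside $\prod_k K$, so a sequence $[x-\alpha_n]$ converges to $[x]$ only if $\alpha_n=0$ eventually. Your first appeal to continuity (``letting $\alpha$ run through a null sequence forces $(x-\alpha)\to x$'') therefore proves nothing. The way continuity actually bites is the paper's Proposition \ref{Homs}: every continuous character $\MM_K\to K$ is $\sigma\circ t_k$ for a single index $k$, which produces the functions $\phi^*(n)$ and $\sigma^\phi_n$ with $t_n\circ\phi=\sigma^\phi_n\circ t_{\phi^*(n)}$. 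Without some substitute for this you have no handle on $\phi$ beyond its values on units, and the subring generated by linear factors of $K$ is not obviously dense in $\MM_K$ when $K\neq\overline{K}$ (e.g.\ $[x^2-2]\in\MM_\QQ$ needs the cyclotomic-type expansion of Proposition \ref{tk evaluation} to be approximated), so ``determined on linear factors plus continuity'' does not yet extend $\phi$.

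The second and larger gap is the rigidity step, which you correctly identify as ``the main obstacle'' but do not resolve. A multiplicative endomorphism $\lambda$ of $K^{\times}$ is wildly non-rigid ($\QQ^{\times}\cong\{\pm1\}\oplus\bigoplus_p\ZZ$, so one can permute primes, send $2\mapsto 6$, etc.), all its values here are algebraic, and Gelfond--Schneider has no purchase on the problem; nor is it clear what transcendence statement you intend to invoke. The paper's entire analytic core exists precisely to close this gap: Proposition \ref{Periodic} (Dirichlet simultaneous approximation and the mean value $\frac1N\sum_{n\le N}|S_n|^2\to\sum_j|c_j|^2$), Lemma \ref{first part} (approximating $N\log(C)/\phi^*(N)$ by reciprocals of integers to force $\log(C)\in\QQ$), Lemma \ref{second part} (a smooth-numbers density argument forcing $S_n\equiv 1$), the multiplicativity of $\phi^*(n)/\phi^*(1)$ from the prime-power analysis of Proposition \ref{MZZ Endos}, and the final proposition forcing $\sigma^\phi_n=\sigma^\phi_1$ via Lemma \ref{preperation lemma}. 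Your proposal has no replacement for any of this. Relatedly, your case (2) is mischaracterized: it is not ``$k=0$, so $\lambda_\alpha=1$'' but the regime where $\phi^*$ is bounded, in which $\phi([x-\alpha])$ can involve arbitrary roots of unity, the image of $\phi$ contains non-units (images of higher-degree irreducibles) whose roots require Proposition \ref{finite periodic}, and the uniform $N$ for finite $K/\QQ$ needs the separate periodicity-mod-$N$ argument. To salvage your approach you would need to prove the character classification first and then import essentially all of the paper's Diophantine machinery.
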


The constant $N$ appearing in part (2) of Theorem \ref{main} is only effective when $K=\QQ$. In that case one can easily compute $N$ given (say,) $\phi([x-2])$ using the methods of the proof. This can be turned into precise results, such as $N<\left(\# \text{ roots of } \phi([x-2])\right)^{1.8}$.
\footnote{The result here is not proved explicitly, but it is a simple corollary of our methods. The constant 1.8 is not optimal, and can be readily improved to $1/\ln(2)+\epsilon$}

\section{General Methods}

In this section, we will establish some general results about $\MM_K$, with $K$ being an arbitrary field. Once convenient, we will specialize to $K\subseteq \CC$. Our main tool will be the ring morphisms $t_k: \MM_K\to K$ defined on elements of the form $[x-\alpha]$ by

$$t_k([x-\alpha])=\alpha^k,$$

and extended to general elements by linearity. Note that we use the convention $0^0=1$, so $t_0$ is the degree map. A first property of these maps is that they can be used to compute polynomial coefficients:

\begin{theorem}[Newton's Formulas]
\label{Newton's Formulas}
For any polynomial $f\in K[x]$ of degree $d$ and $k\leq d$, we have that

$$ke_k(f)=\sum_{n=1}^k(-1)^{n+1}e_{k-n}(f)t_n([f]).$$

Here, $e_n(f)$ are the elementary symmetric polynomial coefficients defined implicitly by

$$f(x)=\sum_{n=0}^{d}(-1)^{d-n}e_{d-n}(f)x^n.$$

\end{theorem}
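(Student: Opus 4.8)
The plan is to recognize the claimed formula as the classical Newton identity between power sums and elementary symmetric functions, and to prove it by a one-variable generating-function computation. First I would unwind what the two sides say in terms of roots. Factor $f$ over $\overline{K}$ as $f(x)=\prod_{i=1}^{d}(x-\alpha_i)$; then the defining expansion $f(x)=\sum_{n=0}^{d}(-1)^{d-n}e_{d-n}(f)x^n$ identifies $e_k(f)$ with the $k$-th elementary symmetric function $e_k(\alpha_1,\dots,\alpha_d)$ (and $e_0(f)=1$). On the other hand $t_n([f])=\sum_{i=1}^{d}\alpha_i^n=:p_n$, the $n$-th power sum of the roots, since $t_n$ is additive and $t_n([x-\alpha])=\alpha^n$ on linear factors; note this quantity lies in $K$, being a symmetric function of the full root multiset of a polynomial over $K$. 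So the assertion is precisely $ke_k=\sum_{n=1}^{k}(-1)^{n+1}e_{k-n}\,p_n$ for $1\le k\le d$.

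For the computation, introduce the polynomial $E(t)=\prod_{i=1}^{d}(1+\alpha_i t)=\sum_{k=0}^{d}e_k t^k\in\overline{K}[t]$. The product rule gives $E'(t)=\sum_{i=1}^{d}\alpha_i\prod_{j\neq i}(1+\alpha_j t)=E(t)\sum_{i=1}^{d}\frac{\alpha_i}{1+\alpha_i t}$, an identity of rational functions. Expanding $\frac{\alpha_i}{1+\alpha_i t}=\sum_{m\ge 0}(-1)^m\alpha_i^{m+1}t^m$ and summing over $i$, this becomes $E'(t)=E(t)\cdot\sum_{m\ge 0}(-1)^m p_{m+1}t^m$ in $\overline{K}[[t]]$. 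Comparing the coefficient of $t^{k-1}$ on both sides: on the left it is $ke_k$ (here is where $k\le d$ is used, so that $t^{k-1}$ is a genuine term of the polynomial $E'$), and on the right it is $\sum_{m=0}^{k-1}(-1)^m e_{k-1-m}\,p_{m+1}$, which equals $\sum_{n=1}^{k}(-1)^{n+1}e_{k-n}\,p_n$ after the substitution $n=m+1$. This is the desired identity.

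Everything after the first paragraph is formal manipulation of polynomials and power series in one variable $t$, with no convergence concerns (indeed $E(t)$ is a polynomial with $E(0)=1$, so $E(t)^{-1}\in\overline{K}[[t]]$ and the argument is just the logarithmic derivative $E'/E$ in disguise). The only real content — and the step I would be most careful to state correctly — is the first-paragraph translation of $t_n([f])$ into the power sum $p_n$ of the roots of $f$ over $\overline{K}$; once that identification is in place the generating-function identity pins everything down, and there is no genuine obstacle beyond bookkeeping of signs and indices.
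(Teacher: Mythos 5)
Your proof is correct. The paper itself gives no argument here --- it simply declares the identities classical --- so there is nothing to compare step for step; what you have written is the standard logarithmic-derivative proof, carried out completely. Your translation of the statement is right: since $[f]=\sum_i[x-\alpha_i]$ in $\MM_{\overline{K}}$ and $t_n$ is additive, $t_n([f])$ is the power sum $p_n$ of the roots, and the sign convention $f(x)=\sum_{n=0}^d(-1)^{d-n}e_{d-n}(f)x^n$ for monic $f$ (monicity is forced, since $[f]$ must lie in $\MM_K$) makes $e_k(f)$ the $k$-th elementary symmetric function of the roots. The identity $E'(t)=E(t)\sum_{m\ge 0}(-1)^m p_{m+1}t^m$ then yields the claim by comparing coefficients of $t^{k-1}$, and since the manipulation involves no division it is valid over any field, matching the generality of the statement. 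One tiny quibble: the restriction $k\le d$ is not really needed for the coefficient comparison (the power-series identity gives the relation for all $k$, with $e_j=0$ for $j>d$); it only matters for how the statement is phrased. This does not affect correctness.
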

\begin{proof} These are classical.
\end{proof}

\begin{corollary}
\label{Tail}
If $K$ has characteristic $0$, then for any monic polynomials $f,g\in K[x]$, $e_k(f)=e_k(g)$ for all $k<N$ if and only of $t_k([f])=t_k([g])$ for all $k<N$.
\end{corollary}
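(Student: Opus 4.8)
The plan is to obtain both implications from Newton's Formulas (Theorem~\ref{Newton's Formulas}) by a single strong induction on $k$; the characteristic hypothesis enters only through the invertibility of the integer $k$ in $K$, and is needed solely for the implication ``the $t_n$ determine the $e_n$''.

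First I would record two boundary observations. For a monic $f$ of degree $d$ one has $e_0(f)=1$ and, with the convention $0^0=1$, $t_0([f])=d$; moreover $e_k(f)=0$ for every $k>d$. In particular the hypothesis ``$t_k([f])=t_k([g])$ for all $k<N$'' already forces $\deg f=\deg g$ (take $k=0$), and I will write $d$ for this common degree; for the reverse implication one should take this equality of degrees as part of the hypothesis (or restrict attention to $1\le k<N$), since $e_0$ and $t_0$ encode genuinely different data.

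For the direction ``$t\Rightarrow e$'': assuming $t_k([f])=t_k([g])$ for all $k<N$, I would prove $e_k(f)=e_k(g)$ by strong induction on $k<N$. The case $k=0$ is $1=1$. For $1\le k\le d$, Theorem~\ref{Newton's Formulas} gives
$$e_k(f)=\frac1k\sum_{n=1}^k(-1)^{n+1}e_{k-n}(f)\,t_n([f]),$$
which is where $\tfrac1k\in K$ is used; the indices $k-n$ occurring here are all $<k$, so the $e$-terms match by induction, while the $t_n([f])$ match by hypothesis, giving $e_k(f)=e_k(g)$. For $d<k<N$ both sides are $0$. The direction ``$e\Rightarrow t$'' is symmetric and needs no division: using $e_0=1$, Newton's formula rearranges to
$$t_k([f])=(-1)^{k+1}\left(k\,e_k(f)-\sum_{n=1}^{k-1}(-1)^{n+1}e_{k-n}(f)\,t_n([f])\right)$$
for $1\le k\le d$, which together with $t_0([f])=d=t_0([g])$ carries the induction through $k=d$. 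For $d<k<N$ one observes that $t_1([f]),\dots,t_d([f])$ already coincide with those of $g$, and a monic polynomial of degree $d$ over a characteristic-$0$ field is determined by these power sums, so $f=g$ and hence $t_k([f])=t_k([g])$ for all $k$ (alternatively one may invoke the standard relation $t_k([f])=\sum_{n=1}^d(-1)^{n+1}e_n(f)\,t_{k-n}([f])$, valid for $k>d$).

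I do not expect a genuine obstacle: the content is exactly that the triangular system relating $(e_1,\dots,e_k)$ and $(t_1,\dots,t_k)$ is invertible over $K$, and the only things to watch are the bookkeeping across the two ranges $k\le d$ and $k>d$ and the anomalous role of $k=0$. If one prefers to sidestep the $k>d$ discussion, it suffices for later applications to state the corollary for $f,g$ of equal degree $d$ and $N\le d+1$.
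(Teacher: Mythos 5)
Your proof is correct and takes essentially the same route as the paper, which simply declares the corollary ``immediate from the Theorem''; you are carrying out the intended inversion of the triangular Newton system, with the characteristic-$0$ hypothesis entering exactly where you say it does (division by $k$). Your side remark that $e_0\equiv 1$ while $t_0([f])=\deg f$, so that the implication from the $e_k$ to the $t_k$ genuinely needs equal degrees (or the restriction to $k\ge 1$), is a valid caveat about the literal statement that the paper's one-line proof glosses over.
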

\begin{proof} This is immediate from the Theorem.
\end{proof}

In particular, we have that the $t_k$ define an injection

$$\MM_K\xhookrightarrow{\prod_{k=0}^{\infty}t_k} \prod_{k=0}^{\infty}K.$$

This allows us to define the topology on $\MM_K$ as the subspace topology, with $\prod_{k=1}^{\infty}K$ given the product topology, and $K$ always given the discrete topology. It is clear that convergence in $\MM_K$ is equivalent to the convergence of the $t_k$, which in turn is equivalent to the convergence of the $e_k$, which are the leading coefficients of a polynomial. We will now show to what extent $t_k$ are the only continuous ring morphisms $\MM_K\to K$.

\begin{lemma}
\label{Homs Lemma}
If $S$ is a subring of $K^N$ for some natural number $N$, then the nonzero morphisms in $\Hom(S,K)$ are projections of $S$ onto a single index, followed by composition with some ring morphism $K\to K$.
\end{lemma}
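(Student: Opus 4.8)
The plan is to run the whole argument through the prime spectrum of $S$, using that $K^N$ has only $N$ prime ideals. Fix a nonzero $\psi\in\Hom(S,K)$. Since $\psi(S)$ is a subring of the field $K$ it has no zero divisors, so $\mathfrak p:=\ker\psi$ is a prime ideal of $S$ with $\mathfrak p\neq S$. Write $\mathfrak m_i=\{v\in K^N:v_i=0\}$ for the $i$-th coordinate ideal of $K^N$. The first observation is that $\mathfrak m_1\mathfrak m_2\cdots\mathfrak m_N=(0)$ in $K^N$: a product $a_1\cdots a_N$ with $a_j\in\mathfrak m_j$ has $i$-th coordinate $\prod_{j=1}^N (a_j)_i$, which vanishes because the factor $j=i$ is the $i$-th coordinate of $a_i\in\mathfrak m_i$, i.e. $0$. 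Intersecting with $S$ gives $\prod_{i=1}^N(\mathfrak m_i\cap S)\subseteq(0)\subseteq\mathfrak p$, so, $\mathfrak p$ being prime, it contains one of the factors; relabelling the coordinates we may assume $\mathfrak m_1\cap S\subseteq\mathfrak p$.

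Next I would cash this in via the first isomorphism theorem. Let $\pi_1:K^N\to K$ be the first projection. Then $\ker(\pi_1|_S)=\mathfrak m_1\cap S\subseteq\ker\psi$, so $\psi$ descends along the quotient map $S\to S/(\mathfrak m_1\cap S)$; since $\pi_1$ identifies $S/(\mathfrak m_1\cap S)$ with the subring $\pi_1(S)\subseteq K$, this produces a ring morphism $\bar\psi:\pi_1(S)\to K$ with $\psi=\bar\psi\circ(\pi_1|_S)$. At this point $\psi$ is already a coordinate projection followed by a ring morphism into $K$, so what is left is to promote $\bar\psi$ to an honest endomorphism of $K$.

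That promotion is the step I expect to be the real obstacle. First I would check that $\bar\psi$ is injective: when $\mathrm{char}\,K=0$ any nonzero element of $\ker\bar\psi$ is algebraic over $\QQ$, and clearing denominators in its minimal polynomial forces a nonzero rational integer $n$ into $\ker\bar\psi$; but $\bar\psi(n)=n\cdot 1_K\neq 0$ in characteristic $0$, a contradiction (the characteristic-$p$ case is easier, since there $\pi_1(S)$ is automatically a field). Injectivity lets $\bar\psi$ extend to the fraction field $L=\mathrm{Frac}(\pi_1(S))\subseteq K$, and a standard Zorn argument extends it further to a field embedding $K\hookrightarrow\overline K$ into a fixed algebraic closure. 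The genuinely delicate point is arranging that this embedding lands back inside $K$ rather than merely in $\overline K$; this is where one must use the hypotheses on $K$ (normality of $K$ over its prime field, or the concrete description of $K$ available wherever the lemma is invoked), after which $\rho$, the resulting endomorphism of $K$, satisfies $\psi=\rho\circ(\pi_1|_S)$, as claimed.
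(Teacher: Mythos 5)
Your proposal is correct in substance and shares the paper's two-step skeleton (first show $\psi$ factors through a single coordinate projection, then extend the induced map on $\pi_i(S)$ to an endomorphism of $K$), but you execute the first step by a genuinely different mechanism. The paper inducts on $N$: if $\psi$ fails to factor through forgetting either of two indices, it produces two elements supported on disjoint coordinates with nonzero images, whose product is $0$, contradicting that $K$ has no zero divisors. You instead observe in one stroke that $\ker\psi$ is prime (as $S/\ker\psi$ embeds in the field $K$), that $\prod_i(\mathfrak m_i\cap S)=0\subseteq\ker\psi$, and hence that $\mathfrak m_i\cap S\subseteq\ker\psi$ for some $i$; the first isomorphism theorem then hands you $\psi=\bar\psi\circ\pi_i|_S$. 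This is non-inductive, treats all coordinates symmetrically, and makes explicit that the whole reduction rests on $K$ being a domain -- the same fact the paper's zero-divisor contradiction uses, just packaged through the prime spectrum. On the second step you are, if anything, more careful than the paper: the paper dismisses the extension of $\bar\psi:\pi_i(S)\to K$ to a morphism $K\to K$ as a ``standard application of Zorn's lemma,'' but as you note this is the delicate point -- a nonzero morphism out of a subring of $K$ need not extend to $K$ at all (field morphisms are injective, so $t\mapsto 0$ on $\QQ[t]\subseteq\QQ(t)$ never extends), and even when $\bar\psi$ is injective the Zorn/Steinitz extension lands in $\overline{K}$ and only returns to $K$ under a normality-type hypothesis. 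Your injectivity argument via minimal polynomials is sound in the paper's intended setting ($K/\QQ$ algebraic), and flagging the descent from $\overline{K}$ back to $K$ as the residual obligation is an honest account of a gap the paper's own proof glosses over.
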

\begin{proof} We proceed by induction on $N$. If $N=1$, then $S$ is a subring of $K$. Thus, the map $S\to K$  is automatically projection of $S$ onto a single index followed by a morphism. Additionally, it is standard application of Zorn's lemma that every map $S\to K$ can be lifted to a map $K\to K$.

For our inductive step, suppose the result is proved for $N-1$. Fix a map $\phi: S\to \ZZ$. It is clear that $\phi$ doesn't factor through the projection $L\to L'$ forgetting the $1$st index if and only if there are two elements which are equal outside of the $1$st index, but give different elements under $\phi$. Equivalently, this means that there is a nonzero element $(a_1,0,0...)$ on which $\phi$ does not act by $0$.

If $\phi$ factors through the projection forgetting the $1$st or $2$nd index, then the induction hypothesis gives us the conclusion. Suppose it does not factor through forgetting either. This means there are elements $(a_1,0,0...)$ and $(0,a_2,0...)$ under which $\phi$ does not act by $0$. However, these elements multiply to $0$. This implies there exist nonzero elements of $K$ which multiply to $0$, which is a contradiction. This completes the proof.
\end{proof}

\begin{proposition}
\label{Homs}
The only continuous ring morphisms $\phi:\MM_{K}\to K$ are of the form $\sigma\circ t_k$, where $\sigma:K\to K$ is a ring morphism.
\end{proposition}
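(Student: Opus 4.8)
The plan is to use continuity to cut $\phi$ down to a ring morphism out of a finite product of copies of $K$, and then to quote Lemma \ref{Homs Lemma}, which is where the arithmetic content of the statement sits.

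First I would unwind what the hypothesis gives. Each factor $K$ is discrete, so a continuous map into $K$ is locally constant and $\ker\phi$ is open. The additive identity of $\MM_K$ is the empty product $[1]$, whose image under the embedding $\prod_k t_k$ is $(0,0,\dots)$, so the open set $\ker\phi$ contains a basic open neighbourhood of this point, which (by definition of the topology on $\MM_K$) has the form $U_F=\{\,m\in\MM_K : t_k(m)=0 \text{ for all }k\in F\,\}$ for some finite $F\subseteq\NN$. This is the one place where continuity is actually used.

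Next I would descend $\phi$ through the finite tuple $(t_k)_{k\in F}$. If $m,m'\in\MM_K$ agree under every $t_k$ with $k\in F$, then their $\MM_K$-difference lies in $U_F\subseteq\ker\phi$ because each $t_k$ is additive, so $\phi(m)=\phi(m')$; hence $\phi$ factors as $\phi=\bar\phi\circ(t_k)_{k\in F}$ for a ring morphism $\bar\phi\colon S\to K$, where $S\subseteq K^F$ is the image of $\MM_K$ under the tuple map, a subring of $K^N$ with $N=|F|$. Since $\phi$ is unital, $\bar\phi\neq 0$.

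Finally, Lemma \ref{Homs Lemma} applies to $\bar\phi$: it is the projection of $S$ onto a single index $k_0\in F$, post-composed with a ring morphism $\sigma\colon K\to K$. Composing with $(t_k)_{k\in F}$ and noting that projecting the tuple onto the $k_0$-th coordinate recovers $t_{k_0}$ yields $\phi=\sigma\circ t_{k_0}$. The main obstacle, such as it is, is the first reduction: being careful about which element of $\MM_K$ is the zero, what the basic open sets of the subspace topology look like, and why the fibres of $(t_k)_{k\in F}$ refine those of $\phi$; once $\phi$ has been pushed down to a subring of $K^N$, the conclusion is an immediate citation of Lemma \ref{Homs Lemma}.
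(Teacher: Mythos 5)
Your argument is correct and is essentially the paper's own proof: openness of $\ker\phi$ in the discrete-target setting, factoring through finitely many $t_k$ to land in a subring of $K^N$, then citing Lemma \ref{Homs Lemma}. Your description of the basic open neighbourhoods of $0$ as cutting out finitely many coordinates is in fact slightly more careful than the paper's phrasing of the ideals $I_N$, but the two reductions are the same.
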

\begin{proof} Since $K$ is discrete, the kernel of $\phi$ contains an open neighborhood of $0$. The ideals

$$I_N=\left\{r\st t_k(r)=0\,\,\forall \,\,k\geq N\right\}$$

form a basis of open neighborhoods of $0$ under the topology for $\MM_K$. Hence, we have that $I_N\in \ker\phi$ for some $N$. Hence, any continuous map $\MM_K\to K$ factors through projection onto $\MM_K/I_N$. This is a subring of $K^N$, hence Lemma \ref{Homs Lemma} gives the desired result.
\end{proof}

This is extremely powerful, as it lets us define the following:

\begin{definition} For any continuous endomorphisms $\phi: \MM_K\to \MM_K$, we define the functions $\phi^*: \ZZ_{\geq 0}\to \ZZ_{\geq 0}$ and $\sigma^{\phi}: \ZZ_{\geq 0}\to \left(\text{morphisms }K\to K\right)$ to be the unique values such that

$$t_n(\phi(r))=\sigma^{\phi}_n(t_{\phi^*(n)}(r))$$

for all $r\in \MM_K$.
\end{definition}

We now define the most important endomorphisms of $\MM_K$:

\begin{proposition} For every $n\geq 1$, the map

$$\lambda_n: \MM_K \to \MM_K$$

sending $\prod_{i}(x-\alpha_i)$ to $\prod_{i}(x-\alpha_i^n)$ is a continuous endomorphism of $\MM_K$. We have that $\lambda_k^*(n)=kn$ for all $k,n$, and $\sigma_n^{\lambda_k}=1$.
\end{proposition}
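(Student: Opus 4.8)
The plan is to verify three things in order: that $\lambda_n$ is well-defined as a set map on $\MM_K$, that it is a ring endomorphism, and that it is continuous, reading off the formulas for $\lambda_k^*$ and $\sigma^{\lambda_k}$ along the way. For well-definedness, the subtle point is that an element of $\MM_K$ is a \emph{ratio} of monic polynomials, so the same element has many representations $\prod_i(x-\alpha_i) / \prod_j(x-\beta_j)$; I need $\lambda_n$ to be independent of the representation. The clean way to see this is to work through the $t_k$ embedding: since $t_k\left(\prod_i(x-\alpha_i)\right) = \sum_i \alpha_i^k$, the defining formula for $\lambda_n$ is equivalent to the identity $t_k(\lambda_n(r)) = t_{kn}(r)$ for all $k \geq 0$ and all $r$ of the form $[x-\alpha]$, hence for all $r \in \MM_K$ by additivity of both sides in $r$ (recall addition in $\MM_K$ is multiplication of rational functions, and $t_k$ is a ring morphism, so $t_k$ is additive in this sense). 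Since the $t_k$ collectively inject $\MM_K$ into $\prod_k K$, this identity both shows $\lambda_n$ is well-defined (it prescribes the image uniquely) and pins down $\lambda_k^*(n) = kn$ and $\sigma_n^{\lambda_k} = \mathrm{id}$ immediately.

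The remaining work is then mostly formal. That $\lambda_n$ respects the additive structure of $\MM_K$ is clear: on generators $[x-\alpha]$ it is obvious that $\lambda_n([x-\alpha]\cdot_{\mathrm{add}}[x-\beta]) = \lambda_n([x-\alpha\beta \text{ stuff}])$ — more precisely, addition is polynomial multiplication, and $\prod_i(x-\alpha_i^n)\cdot\prod_j(x-\beta_j^n) = \prod(x - \gamma^n)$ where $\gamma$ ranges over the combined list, so $\lambda_n$ is additive, and it sends the additive identity $1$ (the empty product) to itself. For multiplicativity, I use the formula $t_k(\lambda_n(r)) = t_{kn}(r)$ together with the fact that $t_k$ is a ring morphism: for $r, s \in \MM_K$,
$$t_k\big(\lambda_n(r\cdot s)\big) = t_{kn}(r\cdot s) = t_{kn}(r)\,t_{kn}(s) = t_k(\lambda_n(r))\,t_k(\lambda_n(s)) = t_k\big(\lambda_n(r)\cdot\lambda_n(s)\big),$$
and since this holds for all $k$ and the $t_k$ are jointly injective, $\lambda_n(r\cdot s) = \lambda_n(r)\cdot\lambda_n(s)$. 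It also sends the multiplicative identity $[x-1]$ to itself. Hence $\lambda_n$ is a ring endomorphism.

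Continuity is likewise immediate from the same identity: a basic open neighborhood of $0$ in the target is $I_N = \{r : t_k(r) = 0 \ \forall k \geq N\}$, and $\lambda_n^{-1}(I_N) = \{r : t_{kn}(r) = 0 \ \forall k \geq N\} \supseteq I_{nN}$, which is open; equivalently, if a sequence $r_m$ converges then the $t_k(r_m)$ converge for each $k$, hence $t_k(\lambda_n(r_m)) = t_{kn}(r_m)$ converges for each $k$, so $\lambda_n(r_m)$ converges to $\lambda_n$ of the limit. (Here I am using that $K$ is discrete, so "converge" means "eventually constant", making all of this literal.) I expect no real obstacle; the only place demanding care is the well-definedness across different ratio representations, and routing everything through the $t_k$ embedding — which is available by Corollary \ref{Tail} and the injection it yields — dissolves that issue, since the identity $t_k \circ \lambda_n = t_{kn}$ is visibly representation-independent.
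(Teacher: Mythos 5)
Your proof is correct and is exactly the ``straightforward computation'' the paper leaves to the reader: routing well-definedness, multiplicativity, continuity, and the formulas $\lambda_k^*(n)=kn$, $\sigma_n^{\lambda_k}=\mathrm{id}$ through the single identity $t_k\circ\lambda_n=t_{kn}$ and the joint injectivity of the $t_k$ is the natural argument. The only detail left implicit is that $\prod_i(x-\alpha_i^n)$ actually has coefficients in $K$ (its coefficients are symmetric integer polynomials in the $\alpha_i$, hence polynomials in the coefficients of the original $f$), which is needed before the uniqueness-via-$t_k$ argument can certify existence of the image.
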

\begin{proof} This is a straightforward computation.
\end{proof}

The following formula will underpin our use of cyclotomic methods throughout the proof.

\begin{corollary} If $K$ has characteristic $0$, then for any $f\in \overline{K}[x]$,

$$x^df(1/x)=\prod_{n\geq1}\left(1-x^{n}\right)^{c_n}$$

where

$$c_n=\frac{1}{n}\sum_{d|n}\mu(n/d)t_d([f]).$$

The quantities $(1+x)^z$ are defined using Newton's generalized binomial theorem.

\end{corollary}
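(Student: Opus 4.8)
The plan is to reduce the identity to a numerical fact about power sums and close it with Möbius inversion. Write $f$ as a monic polynomial of degree $d$ with roots $\alpha_1,\dots,\alpha_d\in\overline{K}$ (this is the relevant case, since $[f]\in\MM_K$ presupposes $f$ monic), so that the left-hand side is the reciprocal polynomial $x^df(1/x)=\prod_{i=1}^d(1-\alpha_i x)$; abbreviate $p_k:=t_k([f])=\sum_i\alpha_i^k$. First I would place everything in the formal power series ring $\overline{K}[[x]]$: the infinite product on the right is well defined there because $(1-x^n)^{c_n}\equiv 1\pmod{x^n}$ (recall $(1+u)^z=\sum_{k\ge 0}\binom{z}{k}u^k$ by Newton's binomial theorem), so only finitely many factors influence any given coefficient, and both sides lie in the multiplicative group $1+x\overline{K}[[x]]$.

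Next I would pass to formal logarithms. In characteristic $0$ the formal logarithm is a group isomorphism $\log\colon(1+x\overline{K}[[x]],\times)\xrightarrow{\sim}(x\overline{K}[[x]],+)$ with inverse $\exp$, and the binomial-series definition gives $\log((1+u)^z)=z\log(1+u)$; hence it suffices to see that the two sides of the claimed identity have equal logarithm. Expanding term by term,
\[
\log\prod_{i=1}^d(1-\alpha_i x)=\sum_{i=1}^d\log(1-\alpha_i x)=-\sum_{k\ge 1}\frac{p_k}{k}x^k,
\]
while on the other side
\[
\log\prod_{n\ge 1}(1-x^n)^{c_n}=\sum_{n\ge 1}c_n\log(1-x^n)=-\sum_{n\ge 1}c_n\sum_{m\ge 1}\frac{x^{nm}}{m}=-\sum_{N\ge 1}\left(\frac1N\sum_{n\mid N}n\,c_n\right)x^N.
\]
Comparing the coefficient of $x^N$, the corollary reduces to the identity $\sum_{n\mid N}n\,c_n=p_N$ for every $N\ge 1$.

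Finally I would check that identity directly. By definition $n\,c_n=\sum_{d\mid n}\mu(n/d)p_d$, so writing $n=de$ and reorganizing the double sum,
\[
\sum_{n\mid N}n\,c_n=\sum_{d\mid N}p_d\sum_{e\mid(N/d)}\mu(e)=p_N,
\]
since $\sum_{e\mid m}\mu(e)$ vanishes unless $m=1$; this is precisely Möbius inversion applied to the defining relation of the $c_n$. Injectivity of $\log$ then gives the stated product formula.

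I do not expect a genuine obstacle here — the only points needing care are the formal-power-series bookkeeping: $x$-adic convergence of the infinite product, the fact that $\log$ and $\exp$ are mutually inverse isomorphisms in characteristic $0$, and the identity $\log((1+u)^z)=z\log(1+u)$ for the generalized binomial power. If one wishes to avoid $\log$ entirely, the same argument runs with logarithmic derivatives $g\mapsto g'/g$: the left side contributes $-\sum_{k\ge 1}p_k x^{k-1}$, each factor $(1-x^n)^{c_n}$ contributes $-c_n n x^{n-1}/(1-x^n)$ (immediate from termwise differentiation of the binomial series), and a power series with constant term $1$ is determined by its logarithmic derivative in characteristic $0$, so the identical Möbius computation finishes the proof.
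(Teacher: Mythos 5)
Your proof is correct. Note that the paper does not actually prove this corollary --- it simply cites it as ``the so-called Cyclotomic identity'' and points to the literature --- so your argument supplies a complete derivation where the paper has none. Your route (pass to $1+x\overline{K}[[x]]$, apply the formal logarithm, reduce coefficientwise to $\sum_{n\mid N} n\,c_n = p_N$, and close with M\"obius inversion) is the standard proof of the cyclotomic identity, and all the delicate points are handled properly: the $x$-adic convergence of the infinite product via $(1-x^n)^{c_n}\equiv 1 \pmod{x^n}$, the use of characteristic $0$ to make $\log$ an isomorphism, and the formal identity $\log((1+u)^z)=z\log(1+u)$ (which, if one wants to be fully pedantic, follows since both sides have coefficients polynomial in $z$ agreeing on $z\in\ZZ_{\geq 0}$). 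Your restriction to monic $f$ is the right reading of the statement, since $t_d([f])$ only makes sense for $[f]\in\MM_{\overline{K}}$. The logarithmic-derivative variant you sketch at the end is an equally valid alternative that avoids $\exp$/$\log$ entirely. What your self-contained proof buys over the paper's citation is transparency: it makes visible exactly where characteristic $0$ is used, which matters since the surrounding results (e.g.\ Proposition \ref{Mob Inv}) lean on this identity.
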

\begin{proof}
This is the so-called Cyclotomic identity, and can be found in many references.
\end{proof}

Let $\MM_\ZZ$ be the subring of $\MM_\QQ$ consisting of rational functions with integer coefficients.

\begin{proposition}
\label{Mob Inv}
For any $r\in \MM_{\ZZ}$ and $n\in \ZZ_{\geq 1}$,

$$\frac{1}{n}\sum_{d|n}\mu(n/d)t_d(r)\in \ZZ.$$
\end{proposition}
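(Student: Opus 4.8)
The plan is to identify $\frac{1}{n}\sum_{d\mid n}\mu(n/d)t_d(r)$ with an exponent in the cyclotomic factorization of an integral power series, and then deduce integrality from a standard induction in $\ZZ[[x]]$. Write $c_n(r):=\frac{1}{n}\sum_{k\mid n}\mu(n/k)t_k(r)$. Each $t_k\colon\MM_\QQ\to\QQ$ is additive, so $r\mapsto c_n(r)$ is a homomorphism of additive groups; in particular $c_n(-r)=-c_n(r)$ and $c_n(r_1+r_2)=c_n(r_1)+c_n(r_2)$. Since every element of $\MM_\ZZ$ can be written as $[f]-[g]$ with $f,g\in\ZZ[x]$ monic, it suffices to prove $c_n([f])\in\ZZ$ for an arbitrary monic $f\in\ZZ[x]$, which I assume henceforth.

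Let $d=\deg f$. Because $f$ is monic, $x^{d}f(1/x)\in 1+x\ZZ[x]$ has constant term $1$, so it lies in $\ZZ[[x]]^{\times}$. Applying the Cyclotomic identity (the Corollary above) over $K=\QQ$ gives the identity of power series
$$x^{d}f(1/x)=\prod_{n\geq 1}\bigl(1-x^{n}\bigr)^{c_n},\qquad c_n:=c_n([f]),$$
where the infinite product converges $x$-adically because $(1-x^{n})^{c_n}=1-c_nx^{n}+\cdots\in 1+x^{n}\QQ[[x]]$ by Newton's generalized binomial theorem. A priori the $c_n$ are only rational, and the claim amounts to showing they are integers.

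The crux is the following elementary fact, proved by strong induction on $n$: if $G\in 1+x\ZZ[[x]]$ and $G=\prod_{n\geq1}(1-x^{n})^{a_n}$ with $a_n\in\QQ$, then every $a_n\in\ZZ$. Assume $a_1,\dots,a_{n-1}\in\ZZ$. For $a_m\in\ZZ$ the factor $(1-x^{m})^{a_m}$ lies in $\ZZ[[x]]^{\times}$ — it is a polynomial when $a_m\geq 0$ and a product of geometric series when $a_m<0$ — so
$$H:=G\cdot\prod_{m=1}^{n-1}(1-x^{m})^{-a_m}=\prod_{m\geq n}(1-x^{m})^{a_m}\in 1+x\ZZ[[x]],$$
the rearrangement being legitimate by $x$-adic convergence. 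Modulo $x^{n+1}$ every factor with $m>n$ is $\equiv 1$ and the factor with $m=n$ is $\equiv 1-a_nx^{n}$, whence $H\equiv 1-a_nx^{n}\pmod{x^{n+1}}$. Comparing coefficients of $x^{n}$ forces $a_n=-[x^{n}]H\in\ZZ$. Applying this with $G=x^{d}f(1/x)$ and $a_n=c_n$ yields $c_n\in\ZZ$, which completes the proof.

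I do not expect a serious obstacle. The two points needing care are that all manipulations genuinely stay inside $\ZZ[[x]]$ — which is exactly why the induction hypothesis $a_m\in\ZZ$ for $m<n$ is needed before dividing by $(1-x^{m})^{a_m}$ — and the $x$-adic justification of the product rearrangement, which is routine. One could instead argue purely arithmetically, proving the Gauss congruence $\sum_{d\mid n}\mu(n/d)t_d([f])\equiv 0\pmod n$ via $\bigl(\sum_i\alpha_i\bigr)^{q}\equiv\sum_i\alpha_i^{q}\pmod{q\,\ZZ[\alpha_1,\dots,\alpha_r]}$ together with Euler-type congruences $x^{q^{e}}\equiv x^{q^{e-1}}\pmod{q^{e}}$ and reduction to prime-power moduli; but the generating-function argument is shorter and reuses the cyclotomic machinery already in place.
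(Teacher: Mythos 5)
Your proof is correct and takes essentially the same route as the paper: both reduce to a monic $f\in\ZZ[x]$ by additivity, invoke the cyclotomic identity, and extract integrality of $c_n$ by induction (the paper phrases it as a minimal counterexample) from the coefficient of $x^{n}$ in $\prod_{m\geq 1}(1-x^{m})^{c_m}$. Your version is somewhat more careful in justifying why the factors with smaller index can be divided off inside $\ZZ[[x]]$, but the underlying argument is identical.
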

\begin{proof} We prove the result for $r=[f]$, $f\in \ZZ[x]$ monic, and extend to a general element by linearity. Let $N$ be the smallest number for which $c_N=\frac{1}{N}\sum_{d|N}\mu(n/d)t_d(r)$ is not in $\ZZ$. Now, we look at the coefficient of $x^N$ on both sides of

$$x^df(1/x)=\prod_{n\geq1}\left(1-x^{n}\right)^{c_n}.$$

The left hand side's coefficient is an element of $\ZZ$ by assumption. On the right hand side, terms with $n>N$ do not contribute since they are too large. By assumption, all terms with $n<N$ contribute an element of $\ZZ$. Thus, the final answer is an element of $R$ if and only if the coefficient of $x^N$ in $\left(1-x^{N}\right)^{c_N}$ is in $\ZZ$. The coefficient is exactly $-c_N$ and hence $c_N\in \ZZ$ which is a contradiction. This yields the result.
\end{proof}

The following lemmas are necessary to continue:

\begin{proposition}
\label{Periodic}
Let $s_k$ be any sequence of complex numbers given explicitly by

$$s_k=\sum_{j}c_j \theta_j^{k}$$

where $j$ runs over a finite set of values, $c_j$ are complex, and $\theta_j$ are on the unit circle. There are infinitely many $q\geq 1$ such that

$$|s_{k}-s_{k+q}|\leq \frac{4\pi}{q^{1/J}}\sum_{i}|c_i|$$

for every $k\geq 0$, where $J$ is the number of values $j$ runs over. For every $k_0$, $s_{k_0}$ is a limit point of $s_k$. In particular, if $s_k$ converges then it must be constant. The only way for $s_k$ to be constant is for the coefficients of all roots other than $1$ to be zero.
\end{proposition}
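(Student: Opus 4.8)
The plan is to exploit simultaneous Diophantine approximation of the arguments $\theta_j$. Write $\theta_j = e^{2\pi i \phi_j}$ with $\phi_j \in [0,1)$, so that $s_k = \sum_j c_j e^{2\pi i k \phi_j}$. For a positive integer $q$, we have
$$s_{k+q} - s_k = \sum_j c_j e^{2\pi i k \phi_j}\bigl(e^{2\pi i q \phi_j} - 1\bigr),$$
hence $|s_{k+q} - s_k| \le \sum_j |c_j|\,\bigl|e^{2\pi i q \phi_j} - 1\bigr| \le 2\pi \sum_j |c_j|\, \|q\phi_j\|$, where $\|\cdot\|$ denotes distance to the nearest integer. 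So it suffices to produce infinitely many $q$ with $\|q\phi_j\| \le \tfrac{2}{q^{1/J}}$ for every $j$ simultaneously; one would then get the bound $4\pi q^{-1/J}\sum_i |c_i|$ claimed. (If some $\theta_j$ is itself a root of unity, its contribution to $\|q\phi_j\|$ vanishes for suitable $q$ and causes no trouble; the count $J$ can only help.)

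The first key step is therefore a pigeonhole argument: by Dirichlet's simultaneous approximation theorem, for every $Q$ there is a $q$ with $1 \le q \le Q^J$ and $\|q\phi_j\| \le 1/Q$ for all $j$. Taking $Q \to \infty$ produces infinitely many such $q$; and since $q \le Q^J$ gives $Q \ge q^{1/J}$, we get $\|q\phi_j\| \le 1/Q \le q^{-1/J} \le 2q^{-1/J}$, which feeds the estimate above. One must check that these $q$ are genuinely unbounded — this is standard, since any fixed $q$ fails $\|q\phi_j\| \le 1/Q$ once $Q$ is large unless all $q\phi_j \in \ZZ$, and if all $\phi_j$ were rational the whole sequence would be genuinely periodic and the conclusion trivial anyway.

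For the "limit point" assertion, fix $k_0$. Along the sequence of $q$'s just produced, $|s_{k_0 + q} - s_{k_0}| \le 4\pi q^{-1/J}\sum_i|c_i| \to 0$, so $s_{k_0}$ is approached by the terms $s_{k_0+q}$, i.e. it is a limit point of $(s_k)$. Consequently, if $(s_k)$ converges, its limit must equal every $s_{k_0}$, so the sequence is constant. Finally, to see that constancy forces the coefficients of all $\theta_j \ne 1$ to vanish: if $s_k \equiv s_0$ then $\sum_j c_j(\theta_j^k - 1) = 0$ for all $k$; grouping by the common value of $\theta_j$ and invoking linear independence of the distinct geometric sequences $(\theta^k)_k$ over $\CC$ (a Vandermonde/nondegeneracy argument), one concludes $\sum_{j:\theta_j = \theta} c_j = 0$ for each $\theta \ne 1$, which after collecting equal roots is exactly the stated conclusion.

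I expect the only real obstacle to be bookkeeping in the Dirichlet step — making sure the exponent $1/J$ and the constant $2$ come out with the stated shape, and handling degenerate cases (repeated $\theta_j$, roots of unity among the $\theta_j$, or all $\phi_j$ rational) cleanly so that $J$ is always an honest upper bound on the number of independent frequencies. None of these is deep; the heart of the argument is the one-line pigeonhole bound $|s_{k+q}-s_k| \le 2\pi\sum|c_j|\,\|q\phi_j\|$ combined with simultaneous approximation.
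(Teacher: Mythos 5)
Your proposal is correct and follows essentially the same route as the paper: Dirichlet's simultaneous approximation theorem to produce infinitely many $q$ with all $\|q\phi_j\|$ small, the triangle inequality to bound $|s_{k+q}-s_k|$, and the observation that this bound tends to $0$ to get the limit-point and constancy claims. The only divergence is the final assertion, where the paper averages $|s_k|^2$ to get $\frac1N\sum_{k<N}|s_k|^2 \to \sum_j|c_j|^2$ while you invoke linear independence of distinct geometric sequences via a Vandermonde argument; both are standard and equally valid.
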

\begin{proof}
To begin, we assume without loss of generality that all the $\theta_j$ are distinct. Let $\theta_j=e^{2\pi i x_j}$. By Dirichlet's Simultaneous Approximation theorem, we can find infinitely many values $q$ for which there exists $p_j$ such that

$$\left|x_j-\frac{p_j}{q}\right|\leq\frac{1}{q^{1+1/J}}.$$

Using the estimate $\left|e^x-1\right|\leq 2x$ for $\left|x\right|<1$, we get

\begin{align*}
\left|\exp\left(2\pi i q x_j\right)-1\right|\leq \frac{4\pi}{q^{1/J}}.
\end{align*}

Choosing any $k\geq 1$ and summing over $j$, this immediately gives

\begin{align*}
\left|s_{k}-s_{k+q}\right|\leq\frac{4\pi}{q^{1/J}}\sum_{i}\left|c_i\right|.
\end{align*}

Since this tends to $0$ as $q$ gets large, it is clear that every value of $s_k$ must be a limit point.

Now, suppose that  $s_k$ is a constant. By subtracting off the coefficient of $1$, we can assume $s_k=0$ uniformly. The average value of $|s_k|^2$ is $\sum_j |c_j|^2$, and hence we must have $c_j=0$ for all $j$, as desired.
\end{proof}

Sometimes the following will also be useful:

\begin{proposition}
\label{finite periodic}
If $r\in \MM_\CC$ is such that $t_k(r)$ goes over finitely many values, then all roots of $r$ are roots of unity.
\end{proposition}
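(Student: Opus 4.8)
The plan is to treat $k\mapsto t_k(r)$ as a complex linear recurrence sequence and to use the principle that such a sequence takes finitely many values only if it is eventually periodic; one then reads the desired conclusion off from the periodicity.

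First I would unwind the definitions. Writing $r=[f]/[g]$ with $f,g\in\CC[x]$ monic, and using that $t_k$ is additive on $\MM_\CC$ (with $-[h]$ represented by the rational function $1/h$), one gets $t_k(r)=\sum_i\alpha_i^k-\sum_j\beta_j^k$, where $\alpha_i,\beta_j$ run with multiplicity over the roots of $f,g$. Collecting equal values and cancelling yields distinct nonzero complex numbers $\gamma_1,\dots,\gamma_d$ and nonzero integers $c_1,\dots,c_d$ with $t_k(r)=\sum_{l=1}^d c_l\gamma_l^k$ for all $k\geq 1$; a root at $0$ contributes nothing for $k\geq 1$, so apart from a possible root at $0$ the $\gamma_l$ are exactly the roots of $r$. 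This sequence obeys the order-$d$ linear recurrence with characteristic polynomial $\prod_{l=1}^d(T-\gamma_l)$.

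Next, assuming $\{t_k(r):k\geq1\}$ is finite, I would apply pigeonhole to the window vectors $w_k=(t_k(r),\dots,t_{k+d-1}(r))\in\CC^d$: these take finitely many values, so $w_{k_1}=w_{k_2}$ for some $1\leq k_1<k_2$, and feeding this equality forward through the recurrence gives $t_{k+q}(r)=t_k(r)$ for all $k\geq k_1$, where $q=k_2-k_1\geq 1$. Finally I would conclude: eventual periodicity says $\sum_{l=1}^d c_l(\gamma_l^q-1)\gamma_l^k=0$ for all $k\geq k_1$, in particular for $d$ consecutive values of $k$; since the $\gamma_l$ are distinct and nonzero the associated $d\times d$ matrix is a Vandermonde matrix times an invertible diagonal matrix, hence invertible, forcing $c_l(\gamma_l^q-1)=0$ and therefore $\gamma_l^q=1$ for every $l$. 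Thus all roots of $r$ are $q$-th roots of unity.

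I expect essentially no serious obstacle here: the content is routine once the linear-recurrence framing is in place, and the one step that genuinely uses structure rather than bookkeeping is the passage from ``finitely many values'' to ``eventually periodic.'' One could instead first show from boundedness of $t_k(r)$ that every $\gamma_l$ lies on the unit circle and then invoke Proposition \ref{Periodic}, but separately excluding roots with $|\gamma_l|<1$ makes that route longer, so I would avoid it.
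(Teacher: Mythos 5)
Your proof is correct, and it takes a genuinely different route from the one in the paper. You argue purely algebraically: write $t_k(r)=\sum_l c_l\gamma_l^k$ with distinct nonzero $\gamma_l$ and nonzero integer net multiplicities $c_l$, observe the sequence satisfies the order-$d$ recurrence with characteristic polynomial $\prod_l(T-\gamma_l)$, get eventual periodicity from pigeonhole on window vectors, and then kill the coefficients $c_l(\gamma_l^q-1)$ with a Vandermonde argument. The paper instead leans on Proposition \ref{Periodic} (Dirichlet simultaneous approximation plus a mean-value argument): it first rules out roots of absolute value greater than $1$, then splits $t_k(r)$ into a unit-circle part $S_k$ plus an exponentially decaying tail, and plays the almost-periodicity of $S_k$ against the finiteness of the value set to force exact periodicity, handling the roots of absolute value less than $1$ in a separate final step. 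Your version is more elementary and self-contained --- it needs no Diophantine approximation, avoids the case split on $|\gamma_l|$ entirely, and the recurrence-plus-Vandermonde step is airtight where the paper's closing sentences are somewhat compressed. It also yields an explicit bound on the period ($q\leq V^d$ if $t_k(r)$ takes $V$ values and $r$ has $d$ distinct nonzero roots), which is in the spirit of the effectivity remarks in the introduction. What the paper's route buys is reuse: Proposition \ref{Periodic} is needed elsewhere anyway, so the marginal cost of its proof of this proposition is low. One cosmetic point on both proofs: a root at $0$ is not a root of unity, so the statement should really say ``all nonzero roots''; you handle this correctly by noting that roots at $0$ contribute nothing to $t_k$ for $k\geq 1$.
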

\begin{proof} If $r$ has any roots greater than $1$, then Proposition \ref{Periodic} gives that $t_k(r)$ tends to infinity. Hence, we must have that all roots of $r$ are in absolute value less than or equal to $1$. We thus have

$$r=\prod_{i}(x-\theta_i)^c_i\cdot r',$$

where $\theta_i$ are distinct complex numbers of absolute value $1$, and all roots of $r'$ are less than $1$. Letting $E<1$ be the largest root of $r'$, we get that

$$t_k(r)=S_k+O(E^n)$$

where $S_k=\sum_i c_i \cdot \theta_i^k$, and $O$ denotes big-Oh notation. Choose $q$ large so that $|S_{k+q}-S_k|$ is uniformly small, which is possible by Proposition \ref{Periodic}. Then, we would have that

$$t_{k+q}(r)-t_{k}(r)$$

is uniformly small as $q$ gets large, for large enough $k$. However, since $t_k(r)$ goes over finitely values, this must imply that $t_k(r)=t_{k+q}(r)$ for all large enough $k$. In particular, we must have that $S_{k+q}-S_k$ grows small at an exponentially small rate. Proposition \ref{Periodic} says this means that $S_k$ is uniformly zero. Hence, we get that $\theta_i^q=1$ for all $q$.

Now, consider the roots of $r$ smaller than $1$. Since $S_k$ is periodic, we must have that the $t_k$ of the smaller roots must go over finitely many values. However, the $t_k$ of the smaller roots also tend to zero. The only way to go over finitely many values and tend to zero is if the $t_k$ is zero for all large enough roots. The only way to be zero for large enough roots is if there are no roots of absolute value less than $1$, by repeating the same as above.
\end{proof}

The above work motivates the following definition:

\begin{proposition}
\label{MZZ}
The following conditions on an element $r\in \MM_{\ZZ}$ are equivalent:

\begin{enumerate}
\item $r=[f/g]$, where the roots of $f$ and $g$ are all of absolute value $1$ or $0$
\item $r=[f/g]$, where the roots of $f$ and $g$ are all of absolute value $\leq 1$
\item $\lim_{k\to\infty}t_k(r)/\left(1+\epsilon\right)^k=0$ for all $\epsilon>0$
\item $r=c_0[x]+\sum_{n\geq 1}c_n\left[1-x^{-n}\right]$, where $c_n\in \ZZ$ are constants only finitely of which are non-zero
\end{enumerate}

The subring of elements $r\in \MM_\ZZ$ with any (all) of these equivalent properties is called $\MM_{\ZZ,\Phi}$.
\end{proposition}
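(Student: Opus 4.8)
The plan is to prove the cycle of implications $(1)\Rightarrow(2)\Rightarrow(3)\Rightarrow(4)\Rightarrow(1)$, using the analytic tools of Proposition \ref{Periodic} and Proposition \ref{finite periodic} and the cyclotomic identity to move between "root locations" and "asymptotics of $t_k$" and "explicit expansion in the basis $[1-x^{-n}]$".

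First, $(1)\Rightarrow(2)$ is trivial, since absolute value $1$ or $0$ is a special case of absolute value $\leq 1$. For $(2)\Rightarrow(3)$, write $r=[f/g]$ with all roots of absolute value $\leq 1$; then $t_k(r)=\sum_i\alpha_i^k-\sum_j\beta_j^k$ is a sum of at most $\deg f+\deg g$ terms each of absolute value $\leq 1$, so $|t_k(r)|$ is bounded, and dividing by $(1+\epsilon)^k$ sends it to $0$. For $(3)\Rightarrow(1)$ I would argue contrapositively: if $r=[f/g]$ has a root of absolute value $>1$, let $\rho>1$ be the largest such absolute value and apply Proposition \ref{Periodic} to the dominant terms — $t_k(r)=\rho^k S_k+(\text{lower order})$ where $S_k=\sum c_j\theta_j^k$ over the roots of modulus exactly $\rho$ — to see that $t_k(r)/(1+\epsilon)^k$ does not go to $0$ once $1+\epsilon<\rho$ (Proposition \ref{Periodic} shows $S_k$ does not decay, so it returns near its $k=0$ value infinitely often, keeping $|t_k(r)|$ comparable to $\rho^k$). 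Hence $(1),(2),(3)$ are equivalent.

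The substantive part is relating these to $(4)$. For $(4)\Rightarrow(1)$: $[x]$ has its only root at $0$, and $[1-x^{-n}]$ is (up to the monic normalization convention built into $\MM_K$) the rational function whose roots are the $n$th roots of unity in the numerator and $0$ in the denominator; a $\ZZ$-linear combination of such elements therefore has all its roots among roots of unity and $0$, giving $(1)$. For the converse direction $(1)\Rightarrow(4)$ — which I expect to be the main obstacle — I would use the cyclotomic identity: given $r=[f/g]\in\MM_\ZZ$ whose roots are all roots of unity or $0$, each cyclotomic-polynomial factor $\Phi_d$ and each factor $x$ contributes, and the content is that the expansion coefficients $c_n$ one reads off are \emph{integers}. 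The $n=0$ coefficient is just $t_0(r)=\deg f-\deg g$, an integer. For $n\geq 1$, the natural candidate is $c_n=\frac1n\sum_{d\mid n}\mu(n/d)t_d(r)$ — exactly the quantity appearing in the cyclotomic identity and in Proposition \ref{Mob Inv} — and Proposition \ref{Mob Inv} guarantees $c_n\in\ZZ$ since $r\in\MM_\ZZ$; finiteness of the nonzero $c_n$ follows because $t_d(r)$ is eventually periodic (all roots are roots of unity, so $t_d(r)=\sum\zeta_i^d-\sum\eta_j^d$ with all $\zeta_i,\eta_j$ roots of unity, hence periodic in $d$ with some period $M$), which forces the Möbius average $\frac1n\sum_{d\mid n}\mu(n/d)t_d(r)$ to vanish for all $n$ not dividing $M$ (standard: a periodic sequence's "cyclotomic coefficients" are supported on divisors of the period). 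Then one checks, via the cyclotomic identity applied to both $r$ and $c_0[x]+\sum c_n[1-x^{-n}]$, that these two elements have the same $t_k$ for all $k$, hence are equal by Corollary \ref{Tail}.

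The one technical point to get right is the bookkeeping in $(1)\Rightarrow(4)$ around the element $[x]$ and the normalization: $[1-x^{-n}]=[(x^n-1)/x^n]$ has numerator with the $n$th roots of unity and a pole of order $n$ at $0$, so the denominator degrees must be balanced against the $[x]$ term, and one must verify the $n=0$ coefficient bookkeeping matches $t_0$. I expect this to be routine once the cyclotomic identity is invoked symmetrically on both sides, with Corollary \ref{Tail} doing the final comparison; the only real input beyond formal manipulation is the integrality statement, which is precisely Proposition \ref{Mob Inv}.
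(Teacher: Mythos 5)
Your cycle of implications does not actually close. The step you label $(3)\Rightarrow(1)$ only proves $(3)\Rightarrow(2)$: arguing contrapositively, the negation of $(1)$ is not ``some root has absolute value greater than $1$'' but ``some root has absolute value different from $0$ and $1$,'' which includes the possibility of a root $\alpha$ with $0<|\alpha|<1$. Such a root is perfectly compatible with $(3)$ (the power sums stay bounded), so your Proposition~\ref{Periodic} argument says nothing about it. The implication $(2)\Rightarrow(1)$ is genuinely arithmetic and is exactly where the hypothesis $r\in\MM_\ZZ$ (rather than $\MM_\QQ$ or $\MM_\CC$) must enter: for instance $[x-1/2]$ satisfies $(2)$ and $(3)$ but not $(1)$. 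The paper closes this gap by observing that the product of the nonzero roots of a monic $f\in\ZZ[x]$ is, up to sign, the lowest nonzero coefficient of $f$, hence a nonzero integer of absolute value at least $1$; combined with all roots having absolute value at most $1$, every nonzero root must have absolute value exactly $1$. As written, your argument establishes $(1)\Leftrightarrow(4)$ and $(1)\Rightarrow(2)\Leftrightarrow(3)$, but never gets from $(2)$ or $(3)$ back to $(1)$ or $(4)$.

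A second, smaller instance of the same oversight occurs in your $(1)\Rightarrow(4)$: you restate the hypothesis as ``the roots are all roots of unity or $0$,'' but $(1)$ only gives absolute value $1$ or $0$. Passing from ``absolute value $1$'' to ``root of unity'' requires Kronecker's theorem, applied using the fact that $f$ is monic with integer coefficients, so that every Galois conjugate of a root of $f$ is again a root of $f$ and hence also has absolute value $1$; the paper invokes Kronecker explicitly at this point. Once that is supplied, your M\"obius-inversion route to $(4)$ (integrality of $c_n$ from Proposition~\ref{Mob Inv}, support of the $c_n$ on divisors of the period of $t_d(r)$, then comparison of all $t_k$) is a legitimate alternative to the paper's induction on $\deg f+\deg g$ by successive removal of cyclotomic factors, and is arguably cleaner. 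But both of your gaps sit precisely at the two points where integrality does the real work, and neither is cosmetic.
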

\begin{proof} We begin by showing that $2\implies 1$. To do this, we note that the smallest non-zero coefficient of $f$ is equal to the product of its non-zero roots. This product must be a non-zero integer, and since none of the roots are larger than $1$ they must all be of absolute value $1$.

Now we show that $3\implies 2$. Let $C$ be the absolute value of the largest root of $r$. If $C\leq 1$, we are done. If $C>1$, write

$$t_n(r)=S_n\cdot C^n+O(E^n)$$,

where $S_n=\sum_i c_i\cdot \theta_i^n$, $|\theta_i|=1$, and $E<C$. Then,

$$S_n=\frac{t_n(r)}{C^n}+O((E/C)^n),$$

so $S_n\to 0$. Thus $S_n$ meets the conditions of Lemma \ref{Periodic}, and hence is zero constant. This contradicts the fact that there is at least one root of absolute value $C$, since otherwise the sum would be non-trivial.

Now we show that $4\implies 3$. To do this, we note that

$$t_k\left(c_0[x]+\sum_{n\geq 1}c_n\left[1-x^{-n}\right]\right)$$

is periodic and hence tends to zero when divided by arbitrarily large values.

Finally we show that $1\implies 4$. We induct on $\deg(f)+\deg(g)$. If $\deg(f)\geq1$ with root $\zeta$, then $\zeta$  is an algebraic number all of whose Galois conjugates are also of absolute value one and hence must be a primitive root of unity of degree $n$, $\zeta=\zeta_n$, by a classical theorem of Kronecker. Thus, $\Phi_{n}(x)|\deg(f)$ where $\Phi_n$ is the $n$th cyclotomic polynomial and hence

$$[f/g]=\left[\left(f/\Phi_{n}(x)\right)/g\right]+\sum_{d|n}\mu(n/d)\left[x^d-1\right].$$

Continuing until the only roots of $f$ and $g$ are all zero, we are done.
\end{proof}

We define $\widehat{\MM_K}$ to be the topological completion of $\MM_K$.

\begin{proposition}
\label{tk evaluation}
The subring $\MM_{\ZZ,\Phi}$ is dense in $\MM_{\ZZ}$. Every element $r\in\widehat{\MM_{\ZZ,\Phi}}=\widehat{\MM_{\ZZ}}$ can be written uniquely in the form

$$r=c_0[x]+\sum_{n \geq 1}c_n\left[1-x^{-n}\right],$$

for arbitrary $c_n$. Moreover, for $k\in \ZZ_{\geq 1}$

$$t_k(r)=\sum_{d|k}c_d$$

and $t_0(r)=c_0$.
\end{proposition}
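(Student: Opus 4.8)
The plan is to write down, for each $r\in\MM_{\ZZ}$, an explicit sequence in $\MM_{\ZZ,\Phi}$ converging to it, and then to bootstrap the same construction into a description of the completion. Everything rests on two elementary evaluations of $t_k$ on the proposed generators. Since $[1-x^{-n}]=[(x^n-1)/x^n]$, linearity of $t_k$ gives $t_0([x])=1$ and $t_k([x])=0^k=0$ for $k\geq1$, while $t_k([1-x^{-n}])=\sum_{\zeta^n=1}\zeta^{k}-n\cdot0^{k}$; by the geometric sum over $n$-th roots of unity this vanishes for $k=0$, and for $k\geq 1$ it vanishes unless $n\mid k$. Two consequences follow at once. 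First, for any integers $c_0,c_1,c_2,\dots$ the partial sums of $c_0[x]+\sum_{n\geq1}c_n[1-x^{-n}]$ form a Cauchy sequence, because the value of $t_k$ on the $m$-th partial sum no longer changes once $m\geq k$; hence the series converges in $\widehat{\MM_{\ZZ}}$. Second, on the limit the coordinate $t_k$ ($k\geq 1$) depends only on the $c_d$ with $d\mid k$, and $t_0$ only on $c_0$, which is precisely the content of the two displayed formulas once one reads them off term by term.

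For density, given $r\in\MM_{\ZZ}$ set $c_0:=t_0(r)$ and $c_n:=\frac{1}{n}\sum_{d\mid n}\mu(n/d)t_d(r)$ for $n\geq1$. By Proposition \ref{Mob Inv} every $c_n$ is an integer, so $r_m:=c_0[x]+\sum_{n=1}^{m}c_n[1-x^{-n}]$ lies in $\MM_{\ZZ,\Phi}$ (by Proposition \ref{MZZ} this subring is exactly the integer span of $[x]$ and the $[1-x^{-n}]$). Möbius inversion of the defining relation for the $c_n$ expresses $t_k(r)$ as a fixed combination of the $c_d$ with $d\mid k$, and the identical combination is produced by $t_k(r_m)$ as soon as $m\geq k$, since every divisor of $k$ is at most $k$; moreover $t_0(r_m)=c_0=t_0(r)$. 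Hence $t_k(r_m)=t_k(r)$ for all $k\leq m$, so $r_m\to r$, proving that $\MM_{\ZZ,\Phi}$ is dense in $\MM_{\ZZ}$; consequently $\widehat{\MM_{\ZZ,\Phi}}=\widehat{\MM_{\ZZ}}$.

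For the normal form in the completion, take $r\in\widehat{\MM_{\ZZ}}$ and write $t_k(r)$ for the eventual value of $t_k$ along any Cauchy representative of $r$. For each $n$ the finite combination $\frac{1}{n}\sum_{d\mid n}\mu(n/d)t_d(r)$ agrees with the same combination computed from a sufficiently advanced term of that representative, hence is an integer by Proposition \ref{Mob Inv}; call it $c_n$, and set $c_0:=t_0(r)$. By the first paragraph the series $s:=c_0[x]+\sum_{n\geq1}c_n[1-x^{-n}]$ converges in $\widehat{\MM_{\ZZ}}$, and Möbius inversion shows its $k$-th coordinate equals $t_k(r)$ for every $k$. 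The maps $t_k$ extend continuously to $\widehat{\MM_{\ZZ}}$ and jointly separate its points (two completion elements with the same $t_k$ for all $k$ are limits of sequences whose difference tends to $0$, using Corollary \ref{Tail} to see the $t_k$ already separate points of $\MM_{\ZZ}$), so $s=r$. Uniqueness is the same computation backwards: $c_0$ is read off as $t_0$ and each $c_n$ is recovered from the coordinate sequence $(t_k)$ by Möbius inversion, so distinct integer sequences $(c_n)$ give distinct elements. Finally, the two formulas for $t_k(r)$ with $k\geq1$ and for $t_0(r)$ are immediate from the termwise evaluation recorded in the first paragraph.

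The substantive step — essentially the whole point of the statement — is Proposition \ref{Mob Inv}: the integrality of the Möbius transform is what keeps the approximants $r_m$ and the normal-form coefficients $c_n$ inside $\MM_{\ZZ,\Phi}$ rather than merely inside its rational analogue, without which density would fail over $\ZZ$. The only other things to watch are bookkeeping about the topology of Section 2: that an infinite sum $\sum_n c_n[1-x^{-n}]$ is interpreted as the limit of its (Cauchy) partial sums, and that an element of $\widehat{\MM_{\ZZ}}$ is pinned down by its sequence of $t_k$-values.
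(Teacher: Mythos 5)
Your proof is correct and follows essentially the same route as the paper's: evaluate $t_k$ on the generators $[x]$ and $[1-x^{-n}]$, show the partial sums are Cauchy, use Proposition \ref{Mob Inv} to keep the M\"obius-transform coefficients integral, and identify elements of the completion by their $t_k$-coordinates. One small caveat: your termwise evaluation (like the paper's own) actually yields $t_k(r)=\sum_{d\mid k}d\,c_d$ for $k\geq 1$, so the displayed formula $\sum_{d\mid k}c_d$ in the statement is missing the factor $d$ --- a typo in the proposition (consistent with the usage in Proposition \ref{MZZ Endos}) rather than a flaw in your argument, but you should not assert that the stated formula is ``immediate'' from your first paragraph without noting this.
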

\begin{proof} To begin, we show that every element of the form $r=c_0[x]+\sum_{n \geq 1}c_n\left[1-x^{-n}\right]$ is in $\widehat{\MM_{\ZZ,\Phi}}$. That follows from the fact that

$$r=\lim_{N\to\infty}\left(c_0[x]+\sum_{n=1}^{N}c_n\left[1-x^{-n}\right]\right),$$

where each term in the limit is in $\MM_{\ZZ,\Phi}$ by Proposition \ref{MZZ} part 3. Next, we show that these sums follow the $t_k$ formula stated. Namely, we begin by checking

\begin{align*}
t_k\left([x^{n}-1]\right)&=\sum_{j=0}^{n-1}e^{2\pi ji/k}\\
&=
\begin{cases} n & n|k\\
0 & \mathrm{otherwise}.
\end{cases}
\end{align*}

Hence, dividing through by $x^{n}$, we get the same formula for $t_k\left([1-x^{-n}]\right)$ except that it has no contribution at $0$. Extending by linearity we get the formula. Uniqueness follows from Mobius inversion.

For the last part of the proposition, we check that every element of $\widehat{\MM_{\ZZ}}$ really can be written in this form. let $r_N$ be a sequence in $\MM_{\ZZ}$ with $r$ as a limit point. Let

$$c_n=\lim_{N\to\infty}\frac{1}{n}\sum_{d|n}\mu(n/d)t_d(r_N),$$

which converges since $t_k$ are continuous. Moreover, $c_n\in \ZZ$ since by Proposition \ref{Mob Inv} they are at every step in the limit. It is now clear that $r=c_0[x]+\sum_{n\geq 1}c_n[1-x^{-n}]$ by Möbius inversion.
\end{proof}

We now prove an important multiplication formula on $\MM_{\ZZ,\Phi}$:

\begin{proposition}
\label{Multiplication Lemma}
For any $n,m$ it holds that

$$\left[x^{n}-1\right]\left[x^{m}-1\right]=\gcd(n,m)\left[x^{\lcm(n,m)}-1\right]$$
\end{proposition}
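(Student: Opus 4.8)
The plan is to verify the identity after applying the separating family of ring homomorphisms $t_k$. This is legitimate since both sides lie in $\MM_\QQ$ (indeed in $\MM_{\ZZ,\Phi}$, as all roots involved are roots of unity), and $\QQ$ has characteristic zero, so the map $\prod_{k\geq 0}t_k:\MM_\QQ\hookrightarrow\prod_{k\geq 0}\QQ$ is injective by Corollary \ref{Tail} and the remark following it. Thus it suffices to check that $t_k$ of the two sides agree for every $k\geq 0$.

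For this I would recall the computation carried out in the proof of Proposition \ref{tk evaluation}: for $k\geq 1$ one has $t_k([x^{n}-1])=n$ if $n\mid k$ and $0$ otherwise, while $t_0([x^{n}-1])=n$ (the degree). Since each $t_k$ is a ring homomorphism, $t_k$ of the left-hand side equals $t_k([x^{n}-1])\cdot t_k([x^{m}-1])$; for $k\geq 1$ this equals $nm$ exactly when $n\mid k$ and $m\mid k$, i.e.\ when $\lcm(n,m)\mid k$, and $0$ otherwise, and for $k=0$ it equals $nm$. On the other hand, $t_k$ of the right-hand side equals $\gcd(n,m)\cdot t_k([x^{\lcm(n,m)}-1])$, which for $k\geq 1$ is $\gcd(n,m)\lcm(n,m)=nm$ when $\lcm(n,m)\mid k$ and $0$ otherwise, and equals $\gcd(n,m)\lcm(n,m)=nm$ for $k=0$. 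The two expressions coincide for all $k$, so the identity follows.

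Since nothing here is subtle, I do not expect a real obstacle; the only point needing a moment of care is bookkeeping the $k=0$ (degree) case separately from $k\geq 1$ and invoking the standard identity $\gcd(n,m)\lcm(n,m)=nm$. As an alternative, purely multiplicative proof one can argue directly from the definition of multiplication in $\MM$: the product $[x^{n}-1][x^{m}-1]$ is represented by $\prod_{a,b}(x-\zeta_n^{a}\zeta_m^{b})$ where $\zeta_n,\zeta_m$ are primitive roots of unity, and since the multiplication map $\mu_n\times\mu_m\to\mu_{\lcm(n,m)}$ is a surjective homomorphism of groups with kernel $\mu_n\cap\mu_m=\mu_{\gcd(n,m)}$ of size $\gcd(n,m)=nm/\lcm(n,m)$, each $\lcm(n,m)$-th root of unity occurs with multiplicity exactly $\gcd(n,m)$. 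Hence $\prod_{a,b}(x-\zeta_n^{a}\zeta_m^{b})=(x^{\lcm(n,m)}-1)^{\gcd(n,m)}$, which in the additive notation of $\MM$ is precisely $\gcd(n,m)[x^{\lcm(n,m)}-1]$.
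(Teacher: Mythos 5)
Your proposal is correct. The paper itself dismisses this as ``a trivial computation that follows directly from the definition of multiplication in $\MM_{\ZZ}$,'' and that computation is precisely your second, ``alternative'' argument: the multiplication map $\mu_n\times\mu_m\to\mu_{\lcm(n,m)}$ is a surjective group homomorphism with kernel of order $\gcd(n,m)$, so $\prod_{a,b}\left(x-\zeta_n^{a}\zeta_m^{b}\right)=\left(x^{\lcm(n,m)}-1\right)^{\gcd(n,m)}$. Your primary route via the $t_k$ is a genuinely different (if slightly heavier) verification: it trades the group-theoretic counting for the multiplicativity of each $t_k$ plus the injectivity of $\prod_{k\geq 0}t_k$ on $\MM_\QQ$, which the paper establishes after Corollary \ref{Tail}. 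Both the $k=0$ bookkeeping and the identity $\gcd(n,m)\lcm(n,m)=nm$ are handled correctly, so either argument stands on its own; the direct one is the more self-contained and is what the author intends.
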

\begin{proof} This is a trivial computation that follows directly from the definition of multiplication in $\MM_{\ZZ}$.
\end{proof}

\section{Endomorphisms of $\MM_\QQ$}

In this section, we study $\End\left(\MM_{\QQ}\right)$. A natural place to start is to study maps on the smaller domain $\MM_{\ZZ,\Phi}$:

\begin{proposition}
\label{MZZ Endos}

If $\phi\in \MM_{\ZZ,\Phi}\to \MM_\CC$, is continuous then for all rational prime powers $p^{\alpha}$ we have that

$$\phi\left(\left[x^{p^{\alpha}}-1\right]\right)=s_{-\infty}[x]+\sum_{\beta=0}^{\alpha}s_\beta \cdot p^{\alpha-\beta}\cdot \left[x^{p^{\beta}}-1\right],$$

where $s_{\beta}\in\{-1,0,1\}$.

\end{proposition}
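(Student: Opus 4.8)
The plan is to exploit the multiplicativity of $\phi$ together with the idempotent-like relations in $\MM_{\ZZ,\Phi}$ coming from Proposition \ref{Multiplication Lemma}. First I would write $r_\beta = [x^{p^\beta}-1]$ for $0 \le \beta \le \alpha$, and record that $r_\beta \cdot r_\gamma = p^{\min(\beta,\gamma)} r_{\max(\beta,\gamma)}$; in particular $r_\beta^2 = p^\beta r_\beta$, so $e_\beta := p^{-\beta} r_\beta$ satisfies $e_\beta^2 = e_\beta$ — these are genuine idempotents in $\MM_{\QQ,\Phi}$, and moreover $e_\beta e_\gamma = e_{\max(\beta,\gamma)}$ for $\beta \neq \gamma$, i.e. they form a chain. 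Applying $\phi$ (extended $\QQ$-linearly, which is legitimate since $\phi$ is a ring map and we can divide by $p$ in $\MM_\CC$), $\phi(e_\beta)$ is again an idempotent, and $\phi(e_0) \le \phi(e_1) \le \cdots$ in the sense that $\phi(e_\beta)\phi(e_\gamma) = \phi(e_{\max})$. Writing everything in the basis $\{[x], [1-x^{-n}]\}$ of $\widehat{\MM_{\ZZ}}$ from Proposition \ref{tk evaluation}, I would first pin down $\phi(r_0) = \phi([x-1])$.

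**Determining $\phi([x-1])$ and the prime-power values via $t_k$.** Here I would pass to the $t_k$-description. By Proposition \ref{tk evaluation} any element of $\widehat{\MM_\ZZ}$ is determined by the integers $c_n$, equivalently by $t_k$. Since $\phi$ is continuous, the composite $t_k \circ \phi$ is a continuous ring morphism $\MM_{\ZZ,\Phi} \to \CC$; by Proposition \ref{Homs} it is $\sigma \circ t_{\phi^*(k)}$ for some embedding $\sigma$ and some index $\phi^*(k)$, but on $\MM_{\ZZ,\Phi}$ all relevant values $t_m(r_\beta)$ are rational integers (indeed $t_m(r_\beta) = p^\beta$ if $p^\beta \mid m$ and $0$ otherwise), so $\sigma$ acts trivially on them. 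Thus $t_k(\phi(r_\beta)) = t_{\phi^*(k)}(r_\beta) \in \{0, p^\beta\}$, and in particular $t_k(\phi(e_\beta)) \in \{0,1\}$, consistent with $\phi(e_\beta)$ being an idempotent with $0/1$ "Fourier coefficients." Now I would use the chain relation: the function $k \mapsto t_k(\phi(e_\beta))$ is the indicator of the set $A_\beta := (\phi^*)^{-1}(\{m : p^\beta \mid m\})$, and the chain property $e_\beta e_\gamma = e_{\max}$ forces $A_0 \supseteq A_1 \supseteq \cdots \supseteq A_\alpha$. Because $\{m : p^\beta \mid m\}$ is exactly the set of multiples of $p^\beta$, and divisor-closedness of $t_k$-supports (from $t_k(r) = \sum_{d\mid k} c_d$) constrains $A_\beta$ to itself be a union of "multiples-of-$d$" sets, one shows each $\phi(e_\beta) - \phi(e_{\beta+1})$ has Fourier support on a single divisor-class, hence $\phi(e_\beta) = \sum_{\gamma \ge \beta} \epsilon_\gamma [1 - x^{-d_\gamma}]/d_\gamma + (\text{possible }[x]\text{ term})$ with coefficients $\epsilon_\gamma \in \{0,1\}$; translating back via $r_\beta = p^\beta e_\beta$ gives the claimed shape with $s_\beta \in \{-1,0,1\}$ once one accounts for the telescoping differences (which is where a coefficient can legitimately be $-1$).

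**The main obstacle.** The delicate point — and the step I expect to fight with — is showing that the "jumps" $\phi(e_\beta) - \phi(e_{\beta+1})$ are supported on a \emph{single} $n$ (a single term $[1-x^{-n}]$ up to the leading $[x]$), rather than spread over several $n$'s. The idempotent condition $\phi(e_\beta)^2 = \phi(e_\beta)$, written in the $c_n$-coordinates, says that the support $S_\beta = \{n : c_n \neq 0\}$ of $\phi(e_\beta)$ must satisfy: for every $k$, $\#\{d \in S_\beta : d \mid k\}$ weighted appropriately is $0$ or $1$ — combined with Möbius inversion this is quite rigid, and I would argue that $S_\beta$ must be (almost) a singleton or empty, using that $\phi(r_\beta)$ arises from $\phi$ applied to an element whose own support is a singleton and that $\phi^*$ must be compatible with the divisibility poset (since $[x^{p^\beta}-1]$ divides, multiplicatively, $[x^{p^{\beta+1}}-1]$-type relations). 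An alternative, possibly cleaner route: use Proposition \ref{finite periodic} — the element $\phi(r_\beta) \in \MM_\CC$ has all $t_k$ taking finitely many values (namely $0$ or $p^\beta$), so all its roots are roots of unity and it lies in $\widehat{\MM_{\ZZ,\Phi}}$; then the idempotent/chain analysis takes place entirely inside the concretely-understood ring $\widehat{\MM_{\ZZ,\Phi}}$ with its $[x^n-1]$ basis and the $\gcd/\lcm$ multiplication, where the classification of idempotent chains is a finite combinatorial problem about the divisor lattice below $p^\alpha$, forcing exactly the stated form.
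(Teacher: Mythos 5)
Your setup (reduce to $\MM_{\ZZ,\Phi}$ via Proposition \ref{finite periodic}, observe $t_k(\phi([x^{p^\beta}-1]))=t_{\phi^*(k)}([x^{p^\beta}-1])\in\{0,p^\beta\}$) matches the paper's starting point, but the core of the proposition is exactly the step you flag as ``the main obstacle,'' and the machinery you propose cannot close it. The problem is that idempotency is vacuous here: in $\widehat{\MM_{\ZZ,\Phi}}\otimes\QQ$, \emph{every} $\{0,1\}$-valued sequence $(t_k)$ comes from an idempotent, since M\"obius inversion $c_d=\tfrac{1}{d}\sum_{e\mid d}\mu(d/e)t_e$ always produces rational coefficients (e.g.\ $\sum_d \tfrac{\mu(d)}{d}[1-x^{-d}]$ is the idempotent with $t_k$ supported at $k=1$). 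So the idempotent and chain conditions add nothing beyond $t_k(\phi(e_\beta))\in\{0,1\}$ and $A_0\supseteq A_1\supseteq\cdots$, which you already have, and which do not force the $c_n$-support onto powers of $p$. Your intermediate claim that the $t_k$-support of an element must be a union of ``multiples-of-$d$'' sets is also false: $[1-x^{-1}]-[1-x^{-2}]$ has $t_k$-support equal to the odd integers. (A smaller point: $p^{-\beta}[x^{p^\beta}-1]$ does not exist in $\MM_\CC$ itself, since additive division by $p$ means extracting a $p$-th root of a rational function; you must tensor with $\QQ$.)

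What actually does the work in the paper is an arithmetic squeeze that is absent from your proposal. Writing $r=\phi([x^{p^\alpha}-1])$ and $N=p^\gamma N'$ with $p\nmid N'$ and $c_N\neq0$, one has on the one hand the upper bound
\begin{equation*}
N\lvert c_N\rvert=\Bigl\lvert\sum_{d\mid N'}\mu(N'/d)\bigl(t_{p^\gamma d}(r)-t_{p^{\gamma-1}d}(r)\bigr)\Bigr\rvert\leq p^{\alpha}\sum_{d\mid N'}1,
\end{equation*}
and on the other hand the integrality from Proposition \ref{Mob Inv} together with the fact that every $t_d(r)$ is $0$ or $p^\alpha$, which forces $Nc_N\equiv 0\bmod p^{\alpha}$ and hence $\lvert c_N\rvert\geq p^{\alpha-\gamma}$. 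Comparing gives $\gamma\leq\alpha$, $N'\leq\sum_{d\mid N'}1$ (so $N'\in\{1,2\}$), and $c_N=\pm p^{\alpha-\gamma}$; a final parity check on $t_{2p^\gamma}(r)=t_{p^\gamma}(r)\pm 2p^{\alpha}$ against $t_k(r)\in\{0,p^\alpha\}$ kills $N'=2$. Without some version of this integrality-versus-size argument your proof does not go through.
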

\begin{proof} Seeing as $t_k(\phi(r))$ must go over finitely values, Proposition \ref{finite periodic} gives us that $\phi(r)$ lies in $\MM_{\ZZ,\Phi}$. Let $r=\phi\left(\left[x^{p^{\alpha}}-1\right]\right)$, and let $c_n=\frac{1}{n}\sum_{d|n}\mu(n/d)t_d(r)$. Choose $N$ such that $c_N\neq 0$. Write $N=p^{\gamma}N'$, with $p$ not dividing $N'$. We begin by showing that $\gamma\leq \alpha$. Namely we compute

\begin{align*}
p^{\gamma}N'\leq p^{\gamma}N'|c_N|&=\left|\sum_{d|p^{\gamma}N'}\mu(p^{\gamma}N'/d)t_d(r)\right|\\
&=\left|\sum_{d|N'}\mu(N'/d)\left(t_{p^{\gamma}d}(r)-t_{p^{\gamma-1}d}(r)\right)\right|\\
&\leq \sum_{d|N'}p^{\alpha}\leq p^{\alpha}N'.
\end{align*}

Hence, $\gamma\leq \alpha$ as desired. Note that in the case $\gamma=0$ the formula written is not strictly correct (i.e, there is not $p^{\gamma-1}d$ correction term), but since $\alpha\geq 1$ our proposition is true a posteriori in this case. Now, we have that $Nc_N$ is a multiple of $p^{\alpha}$. In particular, $c_N$ must be a multiple of $p^{\alpha-\gamma}$. We use this to bound $N'$. Namely, from before we have the inequality

$$p^{\gamma}N'|c_N|\leq \sum_{d|N'}p^{\alpha}.$$

Seeing as $|c_N|\geq p^{\alpha-\gamma}$, we have equality if and only if $c_N=\pm p^{\alpha-\gamma}$ and $N'=\sum_{d|N'}1$. This means $N'=1$ or $N'=2$.

We now show that $N'=2$ is impossible. Let $\gamma$ now be the smallest value (if it exists) such that $c_{2p^{\gamma}}\neq0$. Then,

\begin{align*}
t_{2p^{\gamma}}&=\sum_{d|2p^{\gamma}}dc_d\\
&=\sum_{d|p^{\gamma}}dc_d+2p^{\gamma}c_{2p^{\gamma}}\\
&=t_{p^{\gamma}}\pm2p^{\alpha}.
\end{align*}

Since $t_{p^{\gamma}}$ and $t_{2p^{\gamma}}$ both equal $0$ or $p^{\alpha}$, it is impossible. Hence,  $c_{2p^{\gamma}}=0$ for all $\gamma$. This is the desired result.
\end{proof}

An important element to study in endomorphisms is $\left[x\right]$. We classify the possible places it can go:

\begin{proposition}
\label{Phi Classification}
For all $\phi \in \End\left(\MM_{\QQ}\right)$, we must have that $\phi\left(\left[x\right]\right)$ is one of the following: $[x]$, $0$, $[x-1]$, or $[(x-1)/x]$.
\end{proposition}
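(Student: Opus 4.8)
The key observation is that $[x]=[x-0]$ is idempotent in $\MM_\QQ$: the multiplication rule gives $[x-0]\cdot[x-0]=[x-0\cdot 0]=[x]$. Hence $e:=\phi([x])$ is idempotent, so applying the ring morphisms $t_k$ we get $t_k(e)=t_k(e^2)=t_k(e)^2$, which forces $t_k(e)\in\{0,1\}$ for every $k\geq 0$; by Corollary \ref{Tail} (injectivity of $\prod_k t_k$) this sequence of $0$'s and $1$'s determines $e$ completely. The entire proof then amounts to deciding which such sequences can occur.

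Since $t_k(e)$ takes finitely many values, Proposition \ref{finite periodic} (applied in $\MM_\CC$) shows that every nonzero root of $e$ is a root of unity. As $e\in\MM_\QQ$ has rational coefficients, each irreducible factor of its numerator and denominator is either $x$ or a cyclotomic polynomial, all of which lie in $\ZZ[x]$; hence $e\in\MM_\ZZ$, and since its roots have absolute value $\leq 1$ we conclude $e\in\MM_{\ZZ,\Phi}$ (the root $0$, if present, being permitted by the definition). By Proposition \ref{MZZ}(4) / Proposition \ref{tk evaluation} we may therefore write $e=c_0[x]+\sum_{n\geq 1}c_n[1-x^{-n}]$ with $c_n\in\ZZ$ of finite support, where reading off power sums gives $t_0(e)=c_0$ and $t_k(e)=\sum_{d\mid k}d\,c_d$ for $k\geq 1$.

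From here the argument is purely arithmetic. Möbius inversion of the relation $k\mapsto\sum_{d\mid k}d\,c_d=t_k(e)$ yields $k\,c_k=\sum_{d\mid k}\mu(k/d)\,t_d(e)$, so $|k\,c_k|\leq\sum_{d\mid k}|\mu(k/d)|=2^{\omega(k)}$ because each $t_d(e)\in\{0,1\}$; as $2^{\omega(k)}<k$ for all $k\geq 3$, this forces $c_k=0$ for $k\geq 3$. For the remaining coefficients, $c_0=t_0(e)\in\{0,1\}$, $c_1=t_1(e)\in\{0,1\}$, and $c_1+2c_2=t_2(e)\in\{0,1\}$; checking the cases $c_1=0$ and $c_1=1$ shows $c_2=0$ in either case. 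Thus $e=c_0[x]+c_1[1-x^{-1}]$ with $c_0,c_1\in\{0,1\}$, which gives exactly the four elements $0$, $[x]$, $[(x-1)/x]=[1-x^{-1}]$, and $[x]+[(x-1)/x]=[x-1]$.

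The step I expect to need the most care is the reduction "$t_k(e)$ bounded $\Rightarrow e\in\MM_{\ZZ,\Phi}$": one must check that the root $0$ is genuinely allowed (it is, both in Proposition \ref{finite periodic} and in the definition of $\MM_{\ZZ,\Phi}$) and that rational coefficients together with roots of unity really do upgrade to integer coefficients via the cyclotomic factorization. After that the proof is just the elementary estimate $2^{\omega(k)}<k$ for $k\geq 3$ plus the small-case bookkeeping, and before it the one-line fact that $[x]$ is idempotent, which is ultimately what makes the proposition true.
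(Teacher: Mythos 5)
Your proof is correct, and its computational core --- writing $e=\phi([x])$ as $c_0[x]+\sum_{n}c_n[1-x^{-n}]$, bounding $|k\,c_k|$ by M\"obius inversion against $t_d(e)\in\{0,1\}$, and checking the cases $k\leq 2$ by hand --- is exactly the paper's. Where you genuinely diverge is the first step: the paper obtains $t_k(\phi([x]))\in\{0,1\}$ from the factorization $t_k\circ\phi=\sigma_k\circ t_{\phi^*(k)}$ supplied by Proposition \ref{Homs}, i.e.\ from continuity of $\phi$, whereas you observe that $[x]=[x-0]$ is idempotent and that idempotency is preserved by any ring morphism. This is a real improvement: your argument in fact classifies all idempotents of $\MM_{\QQ}$ (they are $0$, $[x]$, $[x-1]$, $[(x-1)/x]$), and so proves the proposition for arbitrary, not necessarily continuous, endomorphisms. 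You are also more careful than the paper at two points where care is warranted: the passage from ``$t_k(e)$ takes finitely many values'' to $e\in\MM_{\ZZ,\Phi}$ (the paper performs the same reduction silently, inside Proposition \ref{MZZ Endos}, and your cyclotomic-factorization justification of $e\in\MM_{\ZZ}$ is the right one), and the power-sum formula $t_k(e)=\sum_{d\mid k}d\,c_d$, which is what the proof of Proposition \ref{tk evaluation} actually establishes (the factor $d$ is missing from that proposition's statement) and what the paper's own M\"obius-inversion bound $|nc_n|\leq\sum_{d\mid n}1$ implicitly uses. Your explicit elimination of $c_2$ also fills in what the paper waves at with ``looking over the finite number of options.''
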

\begin{proof} As before, $t_k([x])$ takes on only the values $0$ or $1$. If $\phi([x])=c_0[x]+\sum_{n=1}^{\infty}c_n\left[x^n-1\right]$, then

$$\left|nc_n\right|=\left|\sum_{d|n}\mu(n/d)t_d(\phi([x]))\right|\leq \sum_{d|n}1$$

and so if $c_n \neq 0$ then $n\in \{0,1,2\}$. Moreover, in those cases we must have $c_n=1$ is in those cases. Looking over the finite number of options, we get the desired result
\end{proof}

The cases $\phi([x])=[x-1]$ or $\phi([x])=[(x-1)/x])$ can be treated with ease:

\begin{proposition}
\label{Degenerate phis}
The following properties of an endomorphism in $\End\left(\MM_{\QQ}\right)$ are equivalent:

\begin{enumerate}
\item $\forall$ $k\geq 1$, $\phi^*(k)=0$
\item $\exists$ $k\geq 1$ s.t $\phi^*(k)=0$
\item $\phi\left(\left[x\right]\right)=[x-1]$ or $[(x-1)/x]$
\item $\exists$ $k$ s.t $\phi(r)=t_0(r)[x-1]+\left(t_k(r)-t_0(r)\right)[x]$ $\forall$ $r\in \MM_{\QQ}$
\end{enumerate}
\end{proposition}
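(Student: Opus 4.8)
The plan is to prove the cycle of implications $(4)\Rightarrow(1)\Rightarrow(2)\Rightarrow(3)\Rightarrow(4)$, the step $(1)\Rightarrow(2)$ being trivial. Everything rests on two elementary facts about $[x]\in\MM_\QQ$: that $t_n([x])=0^n$ (so $t_n([x])=0$ for $n\geq1$ while $t_0([x])=1$), and that $[x]\cdot r=t_0(r)\cdot[x]$ for every $r$, since multiplying all roots of $r$ by $0$ produces $[x^{t_0(r)}]$. I will also use throughout that $t_n\circ\phi$ and $t_0\circ\phi$ are continuous ring morphisms $\MM_\QQ\to\QQ$ and hence, by Proposition~\ref{Homs}, of the form $\sigma\circ t_k$ with $\sigma:\QQ\to\QQ$ a ring morphism — forcing $\sigma=\mathrm{id}$ — together with the injectivity of $r\mapsto(t_n(r))_{n\geq0}$ noted after Corollary~\ref{Tail}.

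For $(4)\Rightarrow(1)$: applying $t_n$ with $n\geq1$ to the formula in $(4)$ and using $t_n([x-1])=1$, $t_n([x])=0$ gives $t_n(\phi(r))=t_0(r)$ for all $r$; thus $t_n\circ\phi=t_0$, and by the uniqueness in the definition of $\phi^*$ this means $\phi^*(n)=0$ (with $\sigma^\phi_n=\mathrm{id}$) for every $n\geq1$. For $(2)\Rightarrow(3)$: if $\phi^*(k)=0$ with $k\geq1$, then $t_k(\phi([x]))=\sigma^\phi_k(t_0([x]))=\sigma^\phi_k(1)=1$; by Proposition~\ref{Phi Classification}, $\phi([x])$ is one of $[x]$, $0$, $[x-1]$, $[(x-1)/x]$, and the first two are excluded since $t_k([x])=0^k=0$ and $t_k(0)=0$ when $k\geq1$.

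The substantive step is $(3)\Rightarrow(4)$. Applying $\phi$ to $[x]\cdot r=t_0(r)\cdot[x]$ and using that $\phi$ is a ring homomorphism gives $\phi([x])\cdot\phi(r)=t_0(r)\cdot\phi([x])$. Since $t_n(\phi([x]))=1$ for every $n\geq1$ — indeed $t_n([x-1])=1$ and $t_n([(x-1)/x])=t_n([1-x^{-1}])=1$ by Proposition~\ref{tk evaluation} — we may cancel to obtain $t_n(\phi(r))=t_0(r)$ for all $n\geq1$. On the other hand $t_0\circ\phi=t_k$ for a single fixed $k\geq0$ by Proposition~\ref{Homs}. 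Consequently $\phi(r)$ and the element $t_0(r)[x-1]+(t_k(r)-t_0(r))[x]$ take the same value under every $t_n$; moreover the latter genuinely lies in $\MM_\QQ$, its two exponents $t_0(r)$ and $t_0(\phi(r))-t_0(r)$ being integers. By injectivity of $(t_n)_n$ the two elements coincide, which is exactly $(4)$.

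I expect the only real content to be in $(3)\Rightarrow(4)$, and within it the crucial observation is the identity $[x]\cdot r=t_0(r)[x]$, which exhibits $[x]$ as ``degree-absorbing'': it is what allows the single value $\phi([x])$ to control $t_n\circ\phi$ for all $n\geq1$, after which the argument is just bookkeeping with the $t_n$. (Afterwards one can note that the subcase $\phi([x])=[(x-1)/x]$ is in fact vacuous: it would force $t_0\circ\phi=t_k$ with $k\geq1$, which is impossible since $t_0$ is integer-valued on $\MM_\QQ$ whereas $t_k$ is not; this is not needed for the equivalence, however.)
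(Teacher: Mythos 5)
Your proof is correct and follows essentially the same route as the paper: the same cycle of implications, Proposition \ref{Phi Classification} for $(2)\Rightarrow(3)$, and the absorption identity $[x]\cdot r=t_0(r)[x]$ as the engine of $(3)\Rightarrow(4)$. The only cosmetic difference is that you finish $(3)\Rightarrow(4)$ by comparing $t_n$-values and invoking injectivity of $r\mapsto(t_n(r))_n$, whereas the paper solves for $\phi(r)$ directly in $\MM_\QQ$ from the two evaluations of $\phi([x]r)$; both are fine, and your treatment has the small advantage of handling $[x-1]$ and $[(x-1)/x]$ uniformly.
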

\begin{proof}$\,$

$(1)\implies(2)$. Trivial.

$(2)\implies(3)$. Let $k\geq 1$ be the value such that $\phi^*(k)=0$. Then, $t_k\left(\phi\left(\left[x\right]\right)\right)=t_{\phi^*(k)}\left(\left[x\right]\right)=1$. Hence, we cannot have $\phi\left(\left[x\right]\right)$ be $0$ or $[x]$ and so by Proposition \ref{Phi Classification} we get the result.

$(3)\implies(4)$. If $\phi([x])=[(x-1)/x]$, then

\begin{align*}
\phi\left([x]r\right)&=\phi\left(t_0(r)[x]\right)\\
&=t_0(r)\left[(x-1)/x\right]=t_0(r)[x-1]-t_0(r)[x]\\
&=\phi(r)[(x-1)/x]=\phi(r)-t_0(\phi(r))[x]
\end{align*}

so collecting we get that

$$\phi(r)=t_0(r)[x-1]+\left(t_{\phi^*(0)}(r)-t_0(r)\right)[x].$$

We proceed similarly when $\phi([x])=[x-1]$.

$(4)\implies(1)$. It is an unenlightening computation that every $\phi\in\End\left(\MM_{\QQ}\right)$ of the form in (4) is an endomorphism. For any $j\geq 1$,

$$t_j\left(t_0(r)[x-1]+\left(t_{\phi^*(0)}(r)-t_0(r)\right)[x]\right)=t_0(r)$$

so $\phi^*(j)=0$ and hence $(1)$ is implied. Combining these implications, we obtain our result.
\end{proof}

We now analyze the remaining case, $\phi^*(1)\neq 0$. The first step is to study the behavior of $\phi^*$ more closely.

\begin{lemma}
\label{vp dependence}
If $\phi \in \End\left(\MM_{\QQ}\right)$ with $\phi^*(1)\neq 0$, then $v_p(\phi^*(k))$ depends only on $v_p(k)$.
\end{lemma}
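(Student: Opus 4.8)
The plan is to analyze how $\phi$ interacts with the power maps $\lambda_m$ and to exploit the already-established constraints on $\phi([x^{p^\alpha}-1])$ coming from Proposition \ref{MZZ Endos}. The key structural observation is that $\phi \circ \lambda_m = \lambda_m \circ \phi$ fails in general, but there is still a useful compatibility: applying $t_n$ to $\phi(\lambda_m(r))$ and tracking $\phi^*$ should relate $\phi^*(n)$ under multiplication of the index by $m$. Concretely, since $t_n(\lambda_m(r)) = t_{mn}(r)$, one gets $t_n(\phi(\lambda_m(r))) = \sigma^\phi_n(t_{m\phi^*(n)}(r))$ on one hand, and tracing through $\lambda_m \circ \phi$ (or the structure of $\phi(\lambda_m(r))$ as an element whose $t$-values are constrained) on the other. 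The cleanest handle, though, is probably to restrict attention to the dense subring: for prime powers, Proposition \ref{MZZ Endos} tells us exactly what $\phi([x^{p^\alpha}-1])$ looks like, and the coefficient $s_\beta \in \{-1,0,1\}$ together with the $p^{\alpha-\beta}$ weights encodes $v_p$-information.

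**First I would** pin down $\phi^*$ on prime powers. Using $t_n(r) = \sum_{d\mid n} c_d$ from Proposition \ref{tk evaluation} and the formula for $\phi([x^{p^\alpha}-1])$, compute $t_n(\phi([x^{p^\alpha}-1]))$ for all $n$: it equals $\sum_{\beta \le \alpha,\ p^\beta \mid n} s_\beta p^{\alpha-\beta}$ (plus a possible $s_{-\infty}$ term at the degree level). On the other side, $t_n(\phi([x^{p^\alpha}-1])) = \sigma^\phi_n(t_{\phi^*(n)}([x^{p^\alpha}-1])) = \sigma^\phi_n\!\big(p^\alpha \cdot \mathbf{1}[p^\alpha \mid \phi^*(n)]\big)$ — since $t_k([x^{p^\alpha}-1])$ is $p^\alpha$ if $p^\alpha \mid k$ and $0$ otherwise, and more generally $t_k([x^{p^\alpha}-1]) = p^{v_p(k)}$ if $v_p(k) \le \alpha$... wait, that is not right either; $t_k([x^{p^\alpha}-1]) = p^\alpha$ when $p^\alpha \mid k$ and $0$ else. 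Matching these two expressions for varying $\alpha$ forces $v_p(\phi^*(n))$ to be determined: roughly, $v_p(\phi^*(n)) \ge \alpha$ iff the $\alpha$-th expression is a nonzero multiple of $p^\alpha$, and working this out across all $\alpha$ should show $v_p(\phi^*(n))$ is exactly some value depending only on which $s_\beta$ are nonzero — but crucially the $s_\beta$ are attached to $p^\alpha$, not to $n$, so the only dependence on $n$ is through the condition $p^\beta \mid n$, i.e. through $v_p(n)$.

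**The main obstacle** I anticipate is handling the Galois twists $\sigma^\phi_n$ and the degree/constant term $s_{-\infty}$ cleanly: since $\sigma^\phi_n$ is a ring morphism $\QQ \to \QQ$ it must be the identity (there are no nontrivial such morphisms), so that simplifies, but one must be careful that the $t_n$ values being compared are genuine rationals and that $\phi^*(1) \neq 0$ is used to rule out the degenerate situation where $\phi^*(n) = 0$ for $n \ge 1$ (handled by Proposition \ref{Degenerate phis}). The other delicate point is that $\phi^*$ is a priori only defined via the universal property, so I need $\phi^*(n)$ to genuinely be a positive integer (not $0$) for $n \ge 1$ — this is where $\phi^*(1)\neq 0$ enters, and one should check $\phi^*(n) \neq 0$ for all $n \ge 1$ follows, e.g. because $t_n(\phi([x-\zeta])) $ for a suitable root of unity $\zeta$ is forced to be nonzero. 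Once $\phi^*(n) > 0$ is secured, the matching argument above localizes at each prime $p$ independently, and the conclusion $v_p(\phi^*(k))$ depends only on $v_p(k)$ drops out by comparing $k$ and $k'$ with $v_p(k) = v_p(k')$: both give the same system of equations against $\phi([x^{p^\alpha}-1])$ for every $\alpha$.
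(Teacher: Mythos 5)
Your proposal is correct and follows essentially the same route as the paper: both rest on Proposition \ref{Degenerate phis} to guarantee $\phi^*(k)\neq 0$, on Proposition \ref{MZZ Endos} to see that $t_k(\phi([x^{p^\alpha}-1]))$ depends only on $v_p(k)$, and on the identity $t_k(\phi([x^{p^\alpha}-1]))=t_{\phi^*(k)}([x^{p^\alpha}-1])=p^\alpha\cdot\mathbf{1}[p^\alpha\mid\phi^*(k)]$ to read off $v_p(\phi^*(k))$ as $\alpha$ varies. Your version just spells out the matching in more detail than the paper does.
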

\begin{proof} By Proposition \ref{Degenerate phis}, $\phi^*(1)\neq0$ implies $\phi^*(k)\neq 0$ for all $k$ and hence $v_p(\phi^*(k))$ will all be well defined. By the classification given in Lemma \ref{MZZ Endos}, we get that $t_k(\phi([x^{p^{\alpha}}-1]))$ depends only on $v_p(k)$. In turn, $t_k(\phi([x^{p^{\alpha}}-1]))$ as $\alpha$ varies determines $v_p(\phi^*(k))$ and hence we are done.
\end{proof}

\begin{proposition}
\label{multiplicativity}
If $\phi^*(1)\neq0$, $\phi^*(n)/\phi^*(1)$ is multiplicative, and sends powers of $p$ to powers of $p$.
\end{proposition}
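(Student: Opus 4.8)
The plan is to run the whole argument at the level of $p$-adic valuations, with Lemma \ref{vp dependence} as essentially the only input. First I would note that, since $\phi^*(1)\neq 0$, the equivalences in Proposition \ref{Degenerate phis} rule out any $\phi^*(k)=0$ with $k\geq 1$, so $\phi^*(k)\geq 1$ for all $k\geq 1$ and the function $f(n):=\phi^*(n)/\phi^*(1)$ is a well-defined map $\ZZ_{\geq 1}\to\QQ_{>0}$ with $f(1)=1$. By Lemma \ref{vp dependence}, for each prime $p$ there is a function $g_p\colon\ZZ_{\geq 0}\to\ZZ_{\geq 0}$ with $v_p(\phi^*(k))=g_p(v_p(k))$ for every $k\geq 1$; equivalently $v_p(f(n))=g_p(v_p(n))-g_p(0)$ for all $n\geq 1$ and all primes $p$.

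For multiplicativity, take $n,m$ with $\gcd(n,m)=1$ and fix a prime $p$. Then at least one of $v_p(n),v_p(m)$ is zero; say $v_p(m)=0$, so $v_p(nm)=v_p(n)$ and $v_p(m)=v_p(1)$. Hence
$$v_p(f(nm))=g_p(v_p(n))-g_p(0)=\bigl(g_p(v_p(n))-g_p(0)\bigr)+\bigl(g_p(0)-g_p(0)\bigr)=v_p(f(n))+v_p(f(m)).$$
Since this holds for every prime $p$, and $f(nm)$ and $f(n)f(m)$ are both positive rationals, they coincide. Together with $f(1)=1$ this gives that $f$ is multiplicative.

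For the statement about prime powers, fix a prime $p$ and $\alpha\geq 0$. For every prime $q\neq p$ we have $v_q(p^\alpha)=0=v_q(1)$, so $v_q(\phi^*(p^\alpha))=g_q(0)=v_q(\phi^*(1))$ and therefore $v_q(f(p^\alpha))=0$. Thus no prime other than $p$ occurs in the numerator or denominator of $f(p^\alpha)$, i.e.
$$f(p^\alpha)=p^{\,g_p(\alpha)-g_p(0)},$$
a power of $p$.

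In short, this proposition is close to a formal consequence of Lemma \ref{vp dependence}, and I do not expect a genuine obstacle. The only point that needs a remark is whether the exponent $g_p(\alpha)-g_p(0)$ is nonnegative, i.e.\ whether $f$ sends $p^\alpha$ into $\{1,p,p^2,\dots\}$ rather than merely into the group $p^{\ZZ}\subset\QQ_{>0}$. This refinement is not used in establishing the multiplicative structure, and when it is wanted it should be extractable from the explicit description of $\phi([x^{p^\alpha}-1])$ in Proposition \ref{MZZ Endos}, which determines precisely which $\phi^*(k)$ are divisible by $p^\beta$ in terms of $v_p(k)$; comparing the cases $v_p(k)=0$ and $v_p(k)\geq\alpha$ is the small bit of bookkeeping I would expect to need care.
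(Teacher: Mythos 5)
Your proposal is correct and follows essentially the same route as the paper: both arguments reduce everything to Lemma \ref{vp dependence} and compare $p$-adic valuations prime by prime, first handling $f(p^\alpha)$ and then assembling general $n$ from its prime factorization. Your version is if anything slightly more carefully written out (the coprimality case analysis and the remark about the sign of the exponent $g_p(\alpha)-g_p(0)$ are both legitimate points the paper glosses over), but there is no substantive difference in method.
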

\begin{proof} If $n$ is a power of $p$, then $v_q(n)=v_q(n)$ for all $q\neq 0$, and hence $v_q(\phi^*(n))=v_q(\phi^*(1))$ for all $q\neq p$ and hence $\phi^*(n)/\phi^*(1)$ is a power of $p$. Thus, in general,

\begin{align*}
\phi^*(n)&=\prod_{p}p^{v_p(\phi^*(n))}\\
&=\prod_{p^{\alpha}|| n} p^{v_p(\phi^*(p^{\alpha}))} \cdot  \prod_{\substack{p^{\alpha}|| \phi^*(1) \\ p \nmid n}}p^{\alpha}\\
&=\phi^*(1) \cdot \prod_{p^{\alpha}|| n}p^{v_p(\phi^*(p^{\alpha}))-v_p(\phi^*(1))}
\end{align*}

and hence $\phi^*(n)/\phi^*(1)$ is multiplicative.
\end{proof}

\begin{lemma}
\label{first part}
Suppose $\phi^*(1)\neq0$, and $\phi\left(\left[x-k\right]\right)$ has a root of absolute value greater than $1$ for some $k\geq 2$. Then,

$$\limsup_{N\to\infty}\frac{\phi^*(n)}{n}$$

is a nonzero rational number.
\end{lemma}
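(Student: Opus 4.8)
The plan is to identify $\limsup_{n\to\infty}\phi^*(n)/n$ with $\log C_k/\log k$, where $C_k>1$ is the largest modulus of a root of $\phi([x-k])$, and then deduce rationality of that ratio from transcendence theory by running the computation for several bases at once and invoking the Six Exponentials Theorem.

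First, since the only ring morphism $\QQ\to\QQ$ is the identity, Proposition~\ref{Homs} gives $t_n\circ\phi=t_{\phi^*(n)}$, so for every integer $j$ and every $n$,
\[
t_n\big(\phi([x-j])\big)=t_{\phi^*(n)}([x-j])=j^{\phi^*(n)}.
\]
Fix $j\ge 2$ for which $\phi([x-j])$ has a root of modulus $>1$, and write $\phi([x-j])=[f/g]$ with $f,g\in\QQ[x]$ monic and coprime; let $C_j>1$ be the largest modulus of a root of $fg$. Grouping the roots of modulus exactly $C_j$ gives $j^{\phi^*(n)}=t_n(\phi([x-j]))=C_j^{\,n}S_n+R_n$, where $S_n=\sum_i m_i\theta_i^{\,n}$ with the $\theta_i$ distinct of modulus $1$ and the $m_i$ nonzero integers (none of these top roots can cancel, since $f$ and $g$ are coprime), and $R_n=O(E^n)$ with $E<C_j$ the next-largest modulus. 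Since not all $m_i$ vanish, Proposition~\ref{Periodic} shows $S_n$ does not tend to $0$, so $L:=\limsup_n|S_n|>0$. As $j^{\phi^*(n)}$ is a positive real: on the one hand $j^{\phi^*(n)}\le(\deg f+\deg g)\,C_j^{\,n}$ for all $n$, so $\limsup_n\phi^*(n)/n\le\log C_j/\log j$; on the other hand, along any subsequence $n_t\to\infty$ with $|S_{n_t}|\ge L/2$ we get $j^{\phi^*(n_t)}\ge\tfrac{L}{3}C_j^{\,n_t}$ for $t$ large, so $\limsup_n\phi^*(n)/n\ge\log C_j/\log j$. Hence $\limsup_{n\to\infty}\phi^*(n)/n=\log C_j/\log j$ for every such $j$.

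The hypothesis of the lemma lets us apply this with $j=k$; put $\rho:=\log C_k/\log k=\limsup_n\phi^*(n)/n$, which lies in $(0,\infty)$ because $1<C_k<\infty$. As $\rho>0$, there is a subsequence along which $\phi^*(n)\to\infty$, so $j^{\phi^*(n)}$ is unbounded for every $j\ge 2$; since any element of $\MM_\QQ$ all of whose roots have modulus $\le 1$ has bounded $t_n$-values (Proposition~\ref{Periodic}), $\phi([x-j])$ must have a root of modulus $>1$ for every $j\ge 2$, and therefore the identity of the previous paragraph holds for all $j\ge 2$, giving $C_j=j^{\rho}$. Now each $j^{\rho}=C_j$ is the modulus of a root of a polynomial over $\QQ$, hence the modulus of an algebraic number, hence itself algebraic (as $|\alpha|^2=\alpha\bar\alpha\in\overline{\QQ}$). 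Thus $2^{\rho},3^{\rho},5^{\rho}$ are all algebraic, while $\log 2,\log 3,\log 5$ are linearly independent over $\QQ$ by unique factorization; so if $1$ and $\rho$ were linearly independent over $\QQ$ — i.e., if $\rho$ were irrational — the Six Exponentials Theorem would force one of $e^{\rho\log 2}=2^{\rho}$, $e^{\rho\log 3}=3^{\rho}$, $e^{\rho\log 5}=5^{\rho}$ to be transcendental, a contradiction. Hence $\rho\in\QQ_{>0}$, as claimed.

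The difficulty is twofold. The growth estimate of the second paragraph requires controlling the oscillating leading coefficient $S_n$, and the natural tool for this is precisely the simultaneous Diophantine approximation already packaged in Proposition~\ref{Periodic}. The more essential point — and the part I expect to be the real obstacle — is that rationality of $\rho$ cannot be obtained from the combinatorics of $\phi^*$ alone: multiplicativity of $\phi^*(n)/\phi^*(1)$ only bounds the $\limsup$ from above, and one genuinely needs transcendence input. The decisive trick is to run the modulus computation for all integer bases $j\ge 2$ simultaneously, which manufactures the algebraic numbers $2^{\rho},3^{\rho},5^{\rho}$ to which the Six Exponentials Theorem can be applied.
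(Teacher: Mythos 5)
Your proof is correct, but it reaches rationality by a genuinely different mechanism than the paper's. Both arguments open identically: write $k^{\phi^*(n)}=S_nC^n+O(E^n)$ and deduce $\limsup_n\phi^*(n)/n=\log(C)/\log(k)>0$. From there the paper stays elementary: it combines the multiplicativity and prime-power structure of $\phi^*(n)/\phi^*(1)$ from Proposition \ref{multiplicativity} with the Dirichlet-approximation estimate of Proposition \ref{Periodic} to show that $N\log(C)/(\log(k)\phi^*(N))$ is approximated arbitrarily well by reciprocals of integers along a chosen progression $1+m(Nq)$, and concludes by discreteness of the set $\{1/n\}$. You instead run the modulus asymptotic for every integer base $j\geq 2$ simultaneously, obtain $C_j=j^{\rho}$ with each $C_j$ algebraic (as the modulus of an algebraic number), and invoke the Six Exponentials Theorem with $\{1,\rho\}$ against $\{\log 2,\log 3,\log 5\}$ to exclude irrational $\rho$. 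Your route imports a deep transcendence theorem where the paper uses only multiplicativity plus elementary Diophantine approximation, but it is shorter, sidesteps the delicate uniform estimates over $m\ll q^{1/J}$, and yields the extra information that $\phi([x-j])$ has largest root modulus exactly $j^{\rho}$ for every $j\geq 2$. One caveat on your closing commentary: the claim that multiplicativity of $\phi^*(n)/\phi^*(1)$ ``only bounds the $\limsup$ from above'' and that transcendence input is genuinely needed is not accurate --- the paper does extract rationality from that multiplicativity together with Proposition \ref{Periodic}, with no transcendence theory at all.
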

\begin{proof} Let $C$ be the absolute value of the largest root of $r=\phi([x-k])$. By assumption, $C>1$. Letting $E<C$ be the absolute value of the second largest root of $\phi([x-k])$, we find that

\begin{align*}
k^{\phi^*(n)}&=t_{\phi^*(n)}\left([x-k]\right)\\
&=t_n\left(r\right)\\
&=S_n\cdot C^n+O\left(E^n\right),
\end{align*}

where $S_n=\sum_{j} c_j\theta_j^n$, $c_j\in \ZZ \neq 0$, and $|\theta_j|=1$ are all distinct. If $S_{N}$ were negative or complex for some $N$, then $S_N$ would be a limit point of $S_n$ by Lemma \ref{Periodic}. This is a contradiction since then we would have that $k^{\phi^*(n)}$ is negative or complex for some $n$. We can thus take logarithms, and get that

$$\phi^*(n)=\log(C)n +\log(S_n+O((E/C)^n)).$$

Here, by $\log$ we mean logarithm base $k$. It is standard that

$$\lim_{N\to\infty}\frac{1}{N}\sum_{n<N}|S_n|^2=\sum_{j}|c_j|^2.$$

Hence, $S_n\geq 1$ for a positive proportion of $n$ since $S_n$ is bounded above. In particular, we find that

$$\limsup_{n\to\infty}\frac{\phi^*(n)}{n}=\log(C)>0.$$

Choose $N\geq 1$ such that $S_N\neq 0$. Choose $q$ large such that

$$\left|S_n-S_{n+q}\right|\ll\frac{1}{q^{1/J}},$$

where $J\geq 1$ is a fixed integer. There are infinitely many such $q$ by Lemma \ref{Periodic}. We find uniformly over $m\ll q^{1/J}$

\begin{align*}
&\left|\phi^*(N+mq)-\log(C)\cdot mq\right|\\
\ll_N &\left|\phi^*(N+mq)-\log(C)\cdot (N+mq)+\log (S_{N+mq})\right|\\
+&\left|\log (S_{N+mq})-\log(S_N)\right|\\
\ll_N &1.
\end{align*}

We now manipluate to find that when $N^2m < q^{1/J}$,

\begin{align*}
&\frac{1}{\phi^*(N)}\left|\phi^*(N+(N^2m)q)-\log(C)\cdot (N^2m)q\right|\\
&=\left|\frac{\phi^*(1+m(Nq))}{\phi^*(1)}-\frac{N\log(C)}{\phi^*(N)}\cdot m(Nq)\right|\\
&\ll_N 1.
\end{align*}

Now, we observe the following. By Proposition \ref{multiplicativity}, $\phi^*(p^a)/\phi^*(1)$ is a power of $p$ for each prime $p$ and $a\geq 1$. If $\phi^*(p^a)/\phi^*(1) \geq p^{a+1}$, then $\phi^*(p^a)/p^a \geq \phi^*(1)p$.Since $\phi^*(n)\leq \log(C)n+O(1)$, this can only happen for finitely many $p$. In particular, let $m$ be the product of all primes for which $\phi^*(p^a)/\phi^*(1)\geq p^{a+1}$ for some $a\geq1$. Then $1+m(Nq)$ is relatively prime to all of these primes, and hence

$$\frac{\phi^*(1+m(Nq))}{\phi^*(1)(1+m(Nq))}$$

will be the reciprocal of an integer. Hence, since

$$\left|\frac{\phi^*(1+m(Nq))}{\phi^*(1)(1+m(Nq))}-\frac{N\log(C)}{\phi^*(N)}\right|\ll_N \frac{1}{q}$$

for large enough $q$, we have that $\frac{N\log(C)}{\phi^*(N)}$ can be approximated aribitarily well by reciprocals of integers. Seeing as the set of reciprocals of integers is discrete, this means that $\frac{N\log(C)}{\phi^*(N)}$ itself is the reciprocal of an integer. In particular, $\log(C)$ is rational and we get our result.
\end{proof}

\begin{lemma}
\label{second part}
In the language of Lemma \ref{first part}, $S_n=1$ for all $n$.
\end{lemma}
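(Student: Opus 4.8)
The plan is to reduce the lemma to the single identity $\phi^*(n)=\phi^*(1)\,n$ for all $n\ge 1$. Granting this, write $r=\phi([x-k])$; then $t_n(r)=k^{\phi^*(n)}=(k^{\phi^*(1)})^n$ for $n\ge 1$, and dividing the relation $t_n(r)=S_nC^n+O(E^n)$ by $C^n$ gives $(k^{\phi^*(1)}/C)^n=S_n+O((E/C)^n)$. Since the left side is constantly $1$, tends to $\infty$, or tends to $0$, while the right side is bounded and cannot tend to $0$ (a sum $\sum_j c_j\theta_j^n$ tending to $0$ is identically $0$ by Proposition \ref{Periodic}, which contradicts $\sum_j|c_j|^2>0$), we must have $C=k^{\phi^*(1)}$, whence $S_n=1+O((E/C)^n)\to 1$; a convergent sequence of the form $\sum_j c_j\theta_j^n$ is constant by Proposition \ref{Periodic}, so $S_n=1$ for all $n$.

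To prove $\phi^*(n)=\phi^*(1)\,n$, the crucial first step is that $S_n$ is periodic, i.e.\ that every $\theta_j$ is a root of unity; the governing idea is that $S_n$ is forced to ``shadow'' the discrete multiplicative set $\Lambda=\{k^j:j\in\ZZ\}$. Write $\log_kC=p/q$ in lowest terms (rational by Lemma \ref{first part}). Restricting to $n=qm$ makes $C^{qm}=k^{pm}$ an exact power of $k$, so
$$k^{\phi^*(qm)-pm}=S_{qm}+O\!\left((E/C)^{qm}\right),$$
and $S_{qm}$ lies within an exponentially small error of $\Lambda$ (for $n$ in the other residue classes mod $q$ one gets the same for $S_n$ times a fixed scalar). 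Feeding in the almost-period estimate of Proposition \ref{Periodic} -- arbitrarily large $Q$ with $|S_{n}-S_{n+Q}|$ uniformly small -- against the separation of points of $\Lambda$ shows that $\phi^*(qm)-pm$ is eventually periodic in $m$; hence $S_{qm}-S_{q(m+Q)}\to 0$, so it vanishes identically by Proposition \ref{Periodic}, and a short Vandermonde argument across the residue classes mod $q$ forces $\theta_j^{qQ}=1$ whenever $\theta_j^q\neq 1$. So every $\theta_j$ is a root of unity and $S_n$ is periodic.

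The step I expect to be the main obstacle is the book-keeping hidden here. Since $\Lambda$ accumulates at $0$, a priori $\phi^*(qm)-pm$ could drift to $-\infty$, i.e.\ $S_{qm}$ could approach $0$; this must be ruled out, again via the rigidity of Proposition \ref{Periodic} (a sum of exponentials tending to $0$ vanishes) together with $S_n\ge 0$ and $\sum_j|c_j|^2>0$. Moreover, if $\phi([x-k])$ has several ``tiers'' of roots of absolute value $\ge 1$, the expansion $t_n(r)=S_nC^n+O(E^n)$ is too crude: one peels off successive tiers, and on any residue class where a tier's coefficient vanishes one compares the growth of $t_n(r)=k^{\phi^*(n)}$ along multiplicative subfamilies $n=d\cdot(\text{product of large primes})$ with the facts that $\phi^*(n)/\phi^*(1)$ is multiplicative, carries prime powers to prime powers (hence is $\ZZ_{\ge1}$-valued) by Proposition \ref{multiplicativity}, and satisfies $\limsup_n\phi^*(n)/n=\log_kC$. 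Using also $c_0:=\lim_N\tfrac1N\sum_{n<N}S_n\ge 1$ (forced by $S_n\ge 0$ and the non-triviality of $S_n$) and the genuine presence of a root of absolute value $>1$, one eliminates all degenerate configurations; in particular this forces $q=1$ (so $L:=\log_kC\in\ZZ_{\ge1}$) and guarantees $S_n$ is bounded away from $0$.

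With $S_n$ periodic, $q=1$, and $S_n$ bounded away from $0$, the finish is a uniform bootstrap. Each value $S_n=k^{\phi^*(n)-Ln}$ is then a bounded nonnegative integer power of $k$, so $\phi^*(n)=Ln+\log_kS_n$ exactly for all large $n$, with $\log_kS_n$ bounded. For a large prime $m$, $\phi^*(m)/\phi^*(1)$ must be a power of $m$ while $\phi^*(m)=Lm+O(1)$; the only possibility is the first power, which forces $L=\phi^*(1)$ and $\phi^*(m)=\phi^*(1)m$. Then, for any $d\ge 1$, picking a large prime $m$ coprime to $d$ and using multiplicativity gives $\phi^*(md)/\phi^*(1)=m\cdot\bigl(\phi^*(d)/\phi^*(1)\bigr)$, whereas $\phi^*(md)=\phi^*(1)md+O(1)$; comparing these, the bounded correction must vanish, so $\phi^*(d)=\phi^*(1)d$ for every $d\ge 1$. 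This is exactly what we reduced to, and the lemma follows.
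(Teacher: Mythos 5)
Your overall route is the reverse of the paper's: you aim to establish $\phi^*(n)=\phi^*(1)n$ for all $n$ first (which is the content of Proposition \ref{make it mk lemma}, proved in the paper \emph{from} this lemma) and then read off $S_n=1$. That inversion is legitimate in principle, and your endgame --- forcing the bounded integer correction $\phi^*(n)-\log_k(C)\,n$ to vanish by testing multiplicativity of $\phi^*(n)/\phi^*(1)$ at large primes $m$ and at products $md$ --- is correct and arguably cleaner than what it replaces. But it consumes a much stronger input than you have established: you need $\phi^*(n)=\log_k(C)\,n+O(1)$ for \emph{all} large $n$ (in particular along every large prime and every $md$), which requires $S_n$ to be bounded away from $0$ uniformly in $n$. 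This is exactly the point you label ``the main obstacle,'' and the proposal does not close it. Proposition \ref{Periodic} does not do the job: it constrains sequences that \emph{converge}, whereas the dangerous scenario is $S_n$ vanishing identically on a residue class (e.g.\ through collisions $\theta_i^q=\theta_j^q$ with the corresponding $c_j$ summing to zero, which your own Vandermonde step explicitly permits). On such a class $t_n(r)=k^{\phi^*(n)}$ is governed by the lower ``tiers'' of roots of $\phi([x-k])$, and $\phi^*(n)$ need not be $\log_k(C)n+O(1)$ there; since large primes eventually occupy every invertible residue class, your prime-testing step cannot avoid the bad classes. The sketched ``peeling of tiers'' is a description of the difficulty, not an argument.

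The paper's proof is built precisely to sidestep this: it never needs control of $S_n$ for all $n$, only on the positive-density set where $S_n$ is at least roughly $\sqrt{\sum_j|c_j|^2}$. If $\sum_j|c_j|^2\geq 2$, the correction $D\phi^*(n)-D\log(C)n$ is a nonzero bounded integer on that set, so multiplicativity forces $\phi^*(n)$ to be $B$-smooth there, contradicting the fact that $B$-smooth numbers have density zero; the remaining case $\sum_j|c_j|^2=1$ gives $S_n=\theta^n$ with $\theta=1$ by positivity. If you want to keep your architecture, you must either genuinely carry out the multi-tier analysis (redoing Lemma \ref{first part} with $C$ replaced by the top surviving modulus on each residue class) or replace the ``large prime'' test by a positive-density version in the spirit of the paper's smoothness argument. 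There are also smaller unpatched holes: the almost-periods $Q$ of Proposition \ref{Periodic} must be taken divisible by $q$ before you can compare $S_{qm}$ with $S_{q(m+Q/q)}$, and your periodicity step yields only that the sequence $S_n$ is eventually periodic, not that each individual $\theta_j$ is a root of unity --- harmless for your purposes, but it should be stated that way.
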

\begin{proof} By the discussion of Lemma \ref{first part}, $S_n\geq \sqrt{\sum_{j}|c_j|^2}$ for a positive proportion of $n$. Suppose that $\sum_{j}|c_j|^2=1$. Then, clearly, $S_n=\theta^n$ for a single value $\theta$. To make $S_n$ a positive real number for each $n$, we must have $\theta=1$. This gives the desired result.

The other case is that $\sum_{j}|c_j|^2\neq 1$. Here, we get that $S_n\geq \sqrt{2}$ for a positive proportion of $n$. For these values, we have

$$\phi^*(n)=\log(C)n+\log(S_n)+O((E/C)^n).$$

Let $D$ be the denominator of $\log(C)$, which exists by Lemma \ref{first part}. We have

$$D\phi^*(n)=D\log(C)n+D\log(S_n)+O((E/C)^n).$$

Since $S_n\geq \sqrt{2}$, $D\log(S_n)$ is bounded away from zero. Hence,

$$D\phi^*(n)=D\log(C)n+O(1),$$

where $O(1)$ is a bounded size non-zero integer. Every prime dividing $\phi^*(n)$ must either divide $\phi^*(1)$ or $n$ by Proposition \ref{multiplicativity}. If a prime divides both $\phi^*(n)$ and $n$ it must also divide $O(1)$, and hence be of bounded size. Hence, we get that there exists $B>0$ such that $\phi^*(n)$ is $B$-smooth for all $n$. The set of $B$-smooth numbers is of proportion zero in the naturals. In particular, $n/\log(C)$ is always a bounded distance from a $B$-smooth number and hence must be running over a set of proportion zero in the naturals. However, we assumed that $n$ was running over a set of positive density. This is a contradiction. Hence $S_n=1$ for all $n$ and we are done.
\end{proof}

\begin{proposition}
\label{make it mk lemma}

If $\phi^*(1)\neq 0$ and $\phi\left(\left[x-k\right]\right)$ has a root of absolute value greater than $1$ for some $k\geq 2$, then $\phi=\lambda_{\phi^*(1)}+(t_{\phi^*(0)}-t_0)[x]$.
\end{proposition}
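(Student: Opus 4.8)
The plan is first to reformulate the conclusion purely in terms of $\phi^*$. Since $\QQ$ carries no ring endomorphism other than the identity, each $\sigma^{\phi}_n$ is trivial, so $t_n(\phi(r))=t_{\phi^*(n)}(r)$ for all $r$ and $n$. I would then introduce the map $\psi\colon\MM_{\QQ}\to\MM_{\QQ}$, $\psi(r)=\lambda_{\phi^*(1)}(r)+\left(t_{\phi^*(0)}(r)-t_0(r)\right)[x]$, and compute its $t_n$ using $t_0([x])=1$, $t_n([x])=0$ for $n\geq1$, and $t_n(\lambda_m(r))=t_{mn}(r)$ from the proposition defining $\lambda_m$: this yields $t_0(\psi(r))=t_{\phi^*(0)}(r)=t_0(\phi(r))$ and $t_n(\psi(r))=t_{\phi^*(1)n}(r)$ for $n\geq1$. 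Because the $t_n$ jointly separate points of $\MM_{\QQ}$ (Corollary \ref{Tail}), proving $\phi=\psi$ is equivalent to the single arithmetic statement $\phi^*(n)=n\,\phi^*(1)$ for every $n\geq1$, and that is what the rest of the argument would establish.

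Next I would combine Lemmas \ref{first part} and \ref{second part}. From $S_n\equiv1$ (Lemma \ref{second part}), the asymptotic of Lemma \ref{first part} for $r=\phi([x-k])$ becomes $k^{\phi^*(n)}=t_n(r)=C^n+O(E^n)$ with $1<C$ and $E<C$, so taking logarithms base $k$, $\phi^*(n)=n\log_k C+O\!\left((E/C)^n\right)$. Thus each integer $\phi^*(n)$ lies within an exponentially small error of $n\log_k C$. Writing $\log_k C=a/b$ in lowest terms, if $b\geq2$ then whenever $n$ is large and $b\nmid n$ the number $na/b$ is at distance $\geq1/b$ from $\ZZ$, contradicting $\phi^*(n)\in\ZZ$ once the error drops below $1/b$; hence $b=1$ and $m:=\log_k C$ is an integer, positive since $C>1$ and $k\geq2$. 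With $m\in\ZZ$ and the error tending to $0$, it follows that $\phi^*(n)=mn$ for all sufficiently large $n$.

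Finally I would bootstrap from ``large $n$'' to ``all $n$'' via multiplicativity. By Proposition \ref{multiplicativity}, $g(n):=\phi^*(n)/\phi^*(1)$ is multiplicative. Applying $g(n_1n_2)=g(n_1)g(n_2)$ to two large coprime integers (say two large primes) and using $\phi^*(j)=mj$ for large $j$ gives $mn_1n_2/\phi^*(1)=(mn_1/\phi^*(1))(mn_2/\phi^*(1))$, forcing $\phi^*(1)=m$; then for arbitrary $n\geq1$, choosing a large prime $p\nmid n$ and using $g(np)=g(n)g(p)$ together with $\phi^*(np)=mnp$ and $\phi^*(p)=mp$ gives $\phi^*(n)=mn=n\,\phi^*(1)$. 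Combined with the first paragraph this yields $\phi=\lambda_{\phi^*(1)}+(t_{\phi^*(0)}-t_0)[x]$.

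The heavy lifting — rationality of $\log_k C$ and $S_n\equiv1$ — is already carried out in Lemmas \ref{first part} and \ref{second part}, so the only genuinely delicate step here is the integrality argument promoting ``$\phi^*(n)$ close to $n\log_k C$'' to ``$\log_k C\in\ZZ$ and $\phi^*$ is exactly linear for large $n$''; the multiplicativity bootstrap is routine afterwards. I expect the only point to watch is that the $O$-estimates inherited from Lemma \ref{first part} are uniform in $n$, which they are.
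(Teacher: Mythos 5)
Your proposal is correct and follows essentially the same route as the paper: combine Lemmas \ref{first part} and \ref{second part} to get $\phi^*(n)=n\log_k(C)+O((E/C)^n)$, use rationality of $\log_k(C)$ plus integrality of $\phi^*(n)$ to force exact linearity for large $n$, and then invoke the multiplicativity of $\phi^*(n)/\phi^*(1)$ from Proposition \ref{multiplicativity} to extend to all $n\geq 1$. Your bounded-denominator argument in the middle step is in fact more careful than the paper's one-line version of the same point.
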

\begin{proof} In the languages of Lemmas \ref{first part} and \ref{second part}, we have that

$$\phi^*(n)=\log(C)n+O((E/C)^n).$$

Seeing as $\phi^*(n)$ and $\log(C)n$ are rational, this means that $\phi^*(n)=\log(C)n$ for large enough $n$. By the multiplicativity of $\phi^*(n)/\phi^*(1)$, this means that $\log(C)=\phi^*(1)$, so $\phi^*(n)=\phi^*(1)n$ for large enough $n$. Again by the multiplicativity of $\phi^*(n)/\phi^*(1)$, this means that $\phi^*(n)=\phi^*(1)n$ for all $n\geq 1$. Introducing the  $\phi^*(0)=0$, and hence $\phi^*(n)=\phi^*(1)n$ for all $n\geq 1$.

It is clear that $\lambda_{\phi^*(1)}^{*}(n)=\phi^*(1)n$ for all $n\geq 0$. Introducing the $t_{\phi^*(0)}-t_0$ to get the correct value of $\phi^*(0)$, we conclude our proof.
\end{proof}

\begin{lemma}
\label{Finiteness of xp lemma}
If $\phi^*(1)\neq0$ and all the roots of $\phi([x-k])$ are of absolute value less than or equal to $1$ for some $k\geq 2$, then $\phi\left(\left[1-x^{-n}\right]\right)=0$ for all but finitely many $n$.
\end{lemma}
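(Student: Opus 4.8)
### Proof strategy for Lemma~\ref{Finiteness of xp lemma}

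The plan is to combine the structural constraints on $\phi([x-k])$ established so far with the boundedness that kicks in once all roots of $\phi([x-k])$ lie on or inside the unit circle, and to transport this finiteness information back to the values $\phi^*(n)$ and then to $\phi([1-x^{-n}])$. First I would observe that since all roots of $r = \phi([x-k])$ have absolute value $\leq 1$, Proposition~\ref{finite periodic} (or directly Proposition~\ref{MZZ}) tells us $r \in \MM_{\ZZ,\Phi}$ and $t_n(r)$ is \emph{bounded} in $n$; in fact, writing $r$ in the form $c_0[x] + \sum_{m\geq 1} c_m[1-x^{-m}]$, we get $t_n(r) = \sum_{m \mid n} c_m$, a periodic-type function whose values lie in a fixed finite set. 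Since $t_n(r) = t_{\phi^*(n)}([x-k]) = k^{\phi^*(n)}$, and $k \geq 2$, the boundedness of $k^{\phi^*(n)}$ forces $\phi^*(n)$ to take only finitely many values as $n$ ranges over $\ZZ_{\geq 1}$. Call this finite set $F$.

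Next I would exploit the multiplicativity from Proposition~\ref{multiplicativity}: $\phi^*(n)/\phi^*(1)$ is multiplicative and sends prime powers to prime powers. If some prime $p$ had $\phi^*(p^a)/\phi^*(1) > 1$ for arbitrarily large $a$ — or even if finitely many distinct primes each contributed a factor $>1$ — then taking products of such prime powers would make $\phi^*(n)/\phi^*(1)$ unbounded, contradicting finiteness of $F$. Hence there is a \emph{finite} set of "bad" primes $p_1,\dots,p_r$ and bounded exponents, so that $\phi^*(n)$ divides a fixed integer $M$ for all $n \geq 1$; in particular $\phi^*$ is eventually constant on residue classes in a strong sense — more precisely, $\phi^*(n) = \phi^*(1)$ for every $n$ coprime to $p_1 \cdots p_r$. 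The key consequence is: for all $n$ in an arithmetic-progression-rich set (all $n$ coprime to the bad primes), $\phi^*(n)$ equals the single value $\phi^*(1)$.

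Now I would turn to $\phi([1-x^{-n}])$ itself. Using Proposition~\ref{MZZ Endos} applied to $\phi$ restricted to $\MM_{\ZZ,\Phi}$: for each prime power $p^\alpha$, $\phi([x^{p^\alpha}-1])$ is a specific combination of $[x]$ and $[x^{p^\beta}-1]$ with coefficients in $\{-1,0,1\}$ times $p^{\alpha-\beta}$. But the coefficients $c_m$ in the expansion $\phi([x^{p^\alpha}-1]) = \sum s_\beta \, p^{\alpha-\beta}[x^{p^\beta}-1] + s_{-\infty}[x]$ are controlled by $t_k(\phi([x^{p^\alpha}-1])) = t_{\phi^*(k)}([x^{p^\alpha}-1])$, which is $p^\alpha$ if $p^\alpha \mid \phi^*(k)$ and $0$ otherwise. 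Since $\phi^*(k)$ is bounded by $M$ uniformly, for $p^\alpha > M$ we get $t_k(\phi([x^{p^\alpha}-1])) = 0$ for all $k$, hence $\phi([x^{p^\alpha}-1]) = 0$, hence $\phi([1-x^{-p^\alpha}]) = 0$ once $p^\alpha > M$. Finally, I would extend from prime powers to all $n$: using the multiplication formula $[x^a-1][x^b-1] = \gcd(a,b)[x^{\lcm(a,b)}-1]$ (Proposition~\ref{Multiplication Lemma}) together with Möbius inversion, $[1-x^{-n}]$ for general $n$ is an integer combination of products involving $[x^{p^\alpha}-1]$ for the prime powers $p^\alpha \| n$; if $n$ has \emph{any} prime-power factor exceeding $M$, the corresponding $\phi$-image vanishes and kills the whole product. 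The only $n$ that survive are those all of whose prime-power components are $\leq M$, and there are only finitely many such $n$. That gives the claim.

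The main obstacle I anticipate is the last step — cleanly expressing $\phi([1-x^{-n}])$ in terms of $\phi$ of prime-power generators so that the vanishing propagates. One has to be careful that $\phi$ is a ring homomorphism for the \emph{multiplication} of $\MM_\QQ$ (the $\gcd/\lcm$ operation), not ordinary polynomial multiplication, and that the Möbius-inversion identity expressing $[x^n - 1]$ via the lattice of divisors interacts correctly with $\phi$; the safest route is probably to argue directly on the coefficients $c_m$ of $\phi([1-x^{-n}])$ via $c_m = \tfrac1m\sum_{d\mid m}\mu(m/d)\,t_d(\phi([1-x^{-n}]))$ and the formula $t_d(\phi([1-x^{-n}])) = t_{\phi^*(d)}([1-x^{-n}])$, which equals the number of... — i.e., is nonzero only when $n \mid \phi^*(d) \leq M$, so $n > M$ already forces every $c_m = 0$. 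This coefficient-level argument sidesteps the ring-structure bookkeeping entirely and is the version I would write up.
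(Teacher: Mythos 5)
Your final coefficient-level argument --- deducing from $k^{\phi^*(n)}=t_n(\phi([x-k]))$ with $k\geq 2$ that the nonzero values $\phi^*(n)$ are bounded by some $M$, and then observing that $t_d(\phi([1-x^{-n}]))=t_{\phi^*(d)}([1-x^{-n}])$ vanishes for every $d$ once $n>M$, forcing $\phi([1-x^{-n}])=0$ --- is exactly the paper's proof. The detours through Proposition~\ref{multiplicativity} and the prime-power generators $[x^{p^\alpha}-1]$ are unnecessary, since the boundedness of $\phi^*$ already follows directly from the first step.
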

\begin{proof} We find that $t_k(\phi([x-k]))$ is bounded. Seeing as it is integral, we get that it ranges over finitely many values. Hence, $\phi^*(k)$ ranges over finitely many values. Thus, we have that $N\nmid \phi^*(k)$ for all $k\geq 1$, for all but finitely many $N$, by Proposition \ref{Degenerate phis}. For these $N$, we find that $t_k(\phi(\left[x^N-1\right]))$ is zero for all $k\geq 1$. Hence, we must have $\phi(\left[x^N-1\right])=N[x]$ for all but finitely many $N$, giving the result.
\end{proof}

\begin{theorem}
\label{Endomorphism Classification Theorem}
If $\phi\in \End(\MM_{\QQ})$, then either $\phi=\lambda_k+(t_j-t_0)$ for some $k,j$ or $\phi\left(\left[1-x^{-n}\right]\right)=0$ for all but finitely many $n$.
\end{theorem}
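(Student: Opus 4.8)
The plan is to organize everything around the dichotomy supplied by Propositions \ref{Phi Classification} and \ref{Degenerate phis}: either $\phi$ is \emph{degenerate}, meaning $\phi^*(k)=0$ for some (hence, by Proposition \ref{Degenerate phis}, for every) $k\geq1$, or else $\phi^*(1)\neq0$. I will show that the degenerate case always lands in the second alternative, and that the case $\phi^*(1)\neq0$ splits cleanly between the two alternatives via Proposition \ref{make it mk lemma} and Lemma \ref{Finiteness of xp lemma}. Since $\phi^*(1)=0$ forces $\phi$ to be degenerate (take $k=1$ in condition (2) of Proposition \ref{Degenerate phis}), and degeneracy conversely forces $\phi^*(1)=0$, these two situations genuinely partition $\End(\MM_{\QQ})$.

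First I would dispose of the degenerate case. Here Proposition \ref{Degenerate phis} gives a fixed $j$ with $\phi(r)=t_0(r)[x-1]+(t_j(r)-t_0(r))[x]$ for all $r\in\MM_{\QQ}$. Evaluating on $r=[1-x^{-n}]$ and invoking the formulas of Proposition \ref{tk evaluation}: since $[1-x^{-n}]=[(x^n-1)/x^n]$ is a degree-zero rational function, $t_0([1-x^{-n}])=0$, while $t_j([1-x^{-n}])=0$ whenever $n\nmid j$. Because $j$ is fixed, $n\nmid j$ for all but finitely many $n$, whence $\phi([1-x^{-n}])=0$ for all but finitely many $n$. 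This is the only genuine bookkeeping in the argument, and it is a one-line computation.

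Next, assume $\phi^*(1)\neq0$, and split on the absolute values of the roots of $\phi([x-k])$ for $k\geq2$. If there is some $k\geq2$ for which $\phi([x-k])$ has a root of absolute value strictly greater than $1$, then Proposition \ref{make it mk lemma} immediately gives $\phi=\lambda_{\phi^*(1)}+(t_{\phi^*(0)}-t_0)[x]$, which is exactly the first alternative, with parameters $\phi^*(1)\geq1$ and $\phi^*(0)\geq0$. In the complementary case every $\phi([x-k])$ with $k\geq2$ has all roots of absolute value at most $1$; applying Lemma \ref{Finiteness of xp lemma} (with, say, $k=2$) yields $\phi([1-x^{-n}])=0$ for all but finitely many $n$, the second alternative.

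Since the degenerate case and the case $\phi^*(1)\neq0$ exhaust $\End(\MM_{\QQ})$ and each of the three sub-cases above produces one of the two stated alternatives, this completes the argument. I do not anticipate any real obstacle: all the analytic heavy lifting — the rationality of $\log(C)$ in Lemma \ref{first part}, the normalization $S_n=1$ of Lemma \ref{second part}, their consequence Proposition \ref{make it mk lemma}, and the finiteness statement of Lemma \ref{Finiteness of xp lemma} — has already been carried out, so the theorem is purely an assembly of these results together with the elementary check in the degenerate case.
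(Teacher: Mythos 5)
Your proposal is correct and follows essentially the same route as the paper: the same partition into the degenerate case ($\phi^*(1)=0$) versus $\phi^*(1)\neq0$, with the latter split by whether $\phi([x-2])$ has a root of absolute value greater than $1$, resolved by Proposition \ref{make it mk lemma} and Lemma \ref{Finiteness of xp lemma} respectively. Your explicit computation in the degenerate case, using formula (4) of Proposition \ref{Degenerate phis} together with the $t_k$ evaluation of Proposition \ref{tk evaluation}, is in fact a cleaner rendering of what the paper only gestures at (via a garbled cross-reference), so no gap remains.
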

\begin{proof} If $\phi^*(1)=0$, then $\phi\left(\left[1-x^{-n}\right]\right)=0$ for all but finitely many $n$ by the $(2)\iff (3)$ equivalence in Proposition \ref{tk evaluation}. Since every $r\in \MM_\QQ$ can be written as an infinite sum of elements of the form $[1-x^{-n}]$ by Proposition [ref], this means that $\phi(r)$ will be an $N$th root of unity, where $N$ is the least common multiple of those $n$ such that $\phi([1-x^{-n}])$ is nonzero.

Alternatively, suppose $\phi^*(1)\neq 0$. If $\phi([x-2])\in \MM_{\ZZ,\Phi}$ then by Lemma \ref{Finiteness of xp lemma} we have that $\phi\left(\left[1-x^{-n}\right]\right)=0$ for all but finitely many $n$, and hence we can conclude by the above argument. Otherwise, by Lemma \ref{make it mk lemma} we have that $\phi$ is of the desired form.
\end{proof}

\section{Endomorphisms of $\MM_K$, $K/\QQ$ algebraic}

Let $\phi: \MM_K\to \MM_K$ be a continuous ring endomorphism. By Proposition \ref{Homs}, for every $n$ there exist $\sigma^{\phi}_n\in \Gal(K/\QQ)$ and $\phi^*(n)\in \ZZ_{\geq 0}$ so that $t_n \circ \phi= \sigma^{\phi}_n \circ t_{\phi^*(n)}$. In particular, the property of $t_n$ being rational for every $n$ is preserved by $\phi$, and hence in turn the property of having rational coefficients is preserved, by Newton's formulas.

Thus, $\phi$ restricts to a map $\MM_\QQ\to \MM_\QQ$. Theorem \ref{Endomorphism Classification Theorem} says then that either $\phi^*(k)$ is bounded uniformly with respect to $k$, or it is linear of the form $\phi^*(1)k$ for $k\geq 1$. We now treat these two cases:

\begin{proposition} Let $\phi: \MM_K\to \MM_K$ be a continuous ring endomorphism for which $\phi^*(k)$ is bounded. Then all roots of elements in the image of $\phi$ are roots of unity. If $K/\QQ$ is finite, then all the roots are $N$th roots for a fixed $N$.
\end{proposition}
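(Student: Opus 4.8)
The plan is to read everything off the identity $t_k\circ\phi=\sigma^{\phi}_{k}\circ t_{\phi^{*}(k)}$ (Proposition \ref{Homs} and the definition following it), using the hypothesis that $\phi^{*}$ is bounded, say $\phi^{*}(k)\le M$ for all $k$. For the weaker statement, fix $r\in\MM_K$: as $k$ varies, $t_{\phi^{*}(k)}(r)$ ranges over the finite set $\{t_0(r),\dots,t_M(r)\}\subseteq K$, each $t_j(r)$ has only finitely many $\QQ$-conjugates since $K/\QQ$ is algebraic, and $\sigma^{\phi}_{k}$, being a ring morphism $K\to K$, fixes $\QQ$ and hence sends $t_j(r)$ to one of these conjugates. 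So $t_k(\phi(r))$ takes only finitely many values, and Proposition \ref{finite periodic}, applied to $\phi(r)\in\MM_K\subseteq\MM_\CC$, gives that all its roots are roots of unity. (The root $0$ may also occur; it contributes nothing to $t_k$ for $k\ge 1$, so I ignore it throughout.) This settles the first assertion for every algebraic $K/\QQ$.

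Now suppose $[K:\QQ]<\infty$, so $\mathrm{Gal}(K/\QQ)$ is finite. Fixing the harmless convention $\sigma^{\phi}_{k}=\mathrm{id}$ whenever $\phi^{*}(k)=0$, the pairs $(\phi^{*}(k),\sigma^{\phi}_{k})$ then lie in a fixed finite set $P\subseteq\{0,\dots,M\}\times\mathrm{Gal}(K/\QQ)$. The key observation is that $\phi(r)$ depends on $k$ only through this pair-sequence: $t_k(\phi(r))=\sigma^{\phi}_{k}\big(t_{\phi^{*}(k)}(r)\big)$. So if I can exhibit an integer $N=N(\phi)$ such that $k\mapsto(\phi^{*}(k),\sigma^{\phi}_{k})$ has period $N$, then for \emph{every} $r$ the sequence $k\mapsto t_k(\phi(r))$ has period $N$; and since $\phi(r)$ has all nonzero roots equal to roots of unity, $t_k(\phi(r))=\sum_i c_i\zeta_i^{k}$ for $k\ge 1$ (the $\zeta_i$ the distinct nonzero roots, $c_i\in\ZZ$ their signed multiplicities), so Proposition \ref{Periodic} applied to $t_{k+N}(\phi(r))-t_k(\phi(r))=\sum_i c_i(\zeta_i^{N}-1)\zeta_i^{k}$ forces $\zeta_i^{N}=1$ for all $i$ — that is, all roots of $\phi(r)$ are $N$th roots of unity, which is the desired uniform statement.

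It therefore remains to construct one test element $r^{*}$ whose image distinguishes the elements of $P$; then applying the previous paragraph to $\phi(r^{*})$, with $N$ the (finite) period of $k\mapsto t_k(\phi(r^{*}))$, forces the pair-sequence to have period $N$ as well. Let $\beta$ generate $K/\QQ$, and for integers $n,c$ put $\alpha=\beta+n$ and $r^{*}=[x-\alpha]+c\,[x-2]\in\MM_K$, so that $\sigma(t_j(r^{*}))=\sigma(\alpha)^{j}+c\,2^{j}$. One can choose $n$ so that $\alpha$ still generates $K$, is not a root of unity, and has no ratio of two distinct conjugates equal to a root of unity: the conjugates of $\alpha$ lie in the Galois closure of $K$, a fixed number field containing only finitely many roots of unity, and for a distinct pair of conjugates $\beta_i\neq\beta_j$ the equation $(\beta_i+n)/(\beta_j+n)=\zeta$ has at most one integer solution $n$ for each such $\zeta$, so all but finitely many $n$ work. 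With $\alpha$ fixed, suppose $\sigma(\alpha)^{j}+c2^{j}=\sigma'(\alpha)^{j'}+c2^{j'}$ with $(j,\sigma),(j',\sigma')\in P$: if $j\neq j'$ this forces $c$ to equal a fixed element of $\overline{\QQ}$, excluded once $c$ avoids finitely many values; if $j=j'\ge 1$ then $(\sigma(\alpha)/\sigma'(\alpha))^{j}=1$, so $\sigma(\alpha)=\sigma'(\alpha)$ by the choice of $\alpha$ and hence $\sigma=\sigma'$; and $j=j'=0$ is only the pair $(0,\mathrm{id})$. So for $\alpha$ and $c$ chosen this way, $t_k(\phi(r^{*}))$ determines $(\phi^{*}(k),\sigma^{\phi}_{k})$, and the argument closes with $N=N(\phi(r^{*}))$.

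I expect the main obstacle to be exactly this construction: it needs two successive genericity choices — first $n$, to kill coincidences among ratios of conjugates of $\alpha$, then $c$, to kill the remaining "linear" coincidences — and the first rests on the standard but indispensable fact that a number field contains only finitely many roots of unity. Everything else is bookkeeping with $t_k\circ\phi=\sigma^{\phi}_{k}\circ t_{\phi^{*}(k)}$ together with Propositions \ref{finite periodic} and \ref{Periodic}.
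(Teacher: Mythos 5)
Your proof is correct, and two of its three pieces coincide with the paper's: the first assertion (finitely many values of $t_k(\phi(r))$ via boundedness of $\phi^*$ and finiteness of conjugates, then Proposition \ref{finite periodic}) and the closing step (a common period $N$ forces all nonzero roots to be $N$th roots of unity) are argued the same way. Where you genuinely diverge is the middle step, the existence of a uniform $N$. The paper argues by contradiction: the number of values of $t_k(\phi(r))$ is bounded uniformly in $r$ (by roughly $(M+1)[K:\QQ]$), and it asserts that without a common period one could ``take weighted linear combinations'' to build an $r$ whose image takes too many values --- a step it leaves essentially unjustified. You instead build a single separating element $r^*=[x-\alpha]+c\,[x-2]$, with $\alpha$ a generator shifted by an integer so that no two distinct conjugates differ in ratio by a root of unity (the same perturbation trick as Lemma \ref{preperation lemma}, which the paper only deploys in the unbounded case) and $c$ avoiding finitely many bad values, so that $t_k(\phi(r^*))$ determines the pair $(\phi^*(k),\sigma^\phi_k)$. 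Periodicity of $t_k(\phi(r^*))$ --- which, as you note, already follows from the first assertion --- then forces the entire pair-sequence, and hence every sequence $t_k(\phi(r))$, to share one period $N$. This buys an explicit, checkable replacement for the paper's vaguest step, and it isolates the cleaner intermediate statement that $k\mapsto(\phi^*(k),\sigma^\phi_k)$ is itself periodic, which the paper never makes explicit.
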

\begin{proof} To begin, choose $r\in \MM_K$. Then $t_k(\phi(r))=\sigma_k (t_{\phi^*(k)}(r))$ runs over finitely many values, since $t_{\phi^*(k)}(r)$ runs over finitely many values and $\Gal(K/\QQ)$ can send those values to some subset of their (finitely many) Galois conjugates. By Proposition \ref{finite periodic}, we get that all the roots of $\phi(r)$ are roots of unity.

Suppose $K/\QQ$ is finite. Then, $t_{k}(\phi(r))$ goes over finitely many values, and that finite quanity is now uniform as $r$ varies. Suppose there were no $N$ such that every $\phi(r)$ were periodic mod $N$. Then, by taking weighted linear combinations of these values we could create elements $r$ for which $\phi(r)$ takes on arbitrarily many values. This is a contradiction, hence $\phi(r)$ is periodic mod $N$ for some uniform period $N$.

We now show that $r$ being periodic mod $N$ implies all of its nonzero roots are $N$th roots of unity, which in turn will imply our result. Write $t_k(r)=\sum_i c_i \alpha_i^k$. Seeing as this is periodic mod $N$, we find that

$$\sum_{i}c_i \alpha_i^{k_0}\left(\alpha_i^N\right)^k$$

is constant with respect to $k$ for every $k_0$. Hence, we find that

$$\sum_{\alpha_i^N=\alpha}c_i \alpha_i^{k_0}=0$$

for all $\alpha\neq 1$. Thus, again by Proposition \ref{finite periodic}, we get that $c_i=0$ whenever $\alpha_i^N\neq 1$. In particular, all $\alpha_i$ are $N$th roots of unity, so we are done.
\end{proof}

\begin{lemma}
\label{preperation lemma}
Let $\alpha\in \overline{\QQ}$ be an algebraic number. For all large enough integers $N$, none of the Galois conjugates of $\alpha'=N\alpha+1$ differ in ratio by a root of unity.
\end{lemma}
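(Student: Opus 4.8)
The plan is to treat all Galois conjugates of $\alpha$ simultaneously and reduce the problem to finitely many linear equations in $N$. Write $\alpha_1,\dots,\alpha_d$ for the distinct Galois conjugates of $\alpha$ over $\QQ$. We may assume $\alpha\notin\QQ$, since otherwise $\alpha'=N\alpha+1$ has a single conjugate and the statement is vacuous; then $d\geq 2$, no $\alpha_i$ is rational (a rational root would make the minimal polynomial linear), and in particular every $\alpha_i\neq 0$. Let $L$ be the splitting field of the minimal polynomial of $\alpha$; the conjugates of $\alpha'$ are exactly the numbers $\alpha_i'=N\alpha_i+1$, all of which lie in the fixed number field $L$ regardless of $N$.

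The one standard input I would invoke is that the group of roots of unity contained in a number field is finite: if $\zeta$ is a primitive $k$th root of unity in $L$ then $\varphi(k)=[\QQ(\zeta):\QQ]\leq[L:\QQ]$, so only finitely many $k$ can occur. Consequently, if $\alpha_i'/\alpha_j'$ is ever a root of unity, it must equal one of the finitely many roots of unity lying in $L$.

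Next I would turn each potential coincidence into an equation. Suppose that for some integer $N$, some pair $i\neq j$, and some root of unity $\zeta\in L$ we have $\alpha_i'/\alpha_j'=\zeta$; note the denominator $\alpha_j'=N\alpha_j+1$ is nonzero because $\alpha_j$ is irrational. Clearing denominators gives $N(\alpha_i-\zeta\alpha_j)=\zeta-1$. If $\zeta=1$, this forces $\alpha_i=\alpha_j$, contradicting the distinctness of the conjugates. If $\zeta\neq 1$, then $\alpha_i-\zeta\alpha_j=(\zeta-1)/N\neq 0$, so $N=(\zeta-1)/(\alpha_i-\zeta\alpha_j)$ is uniquely determined by the triple $(i,j,\zeta)$. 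Since there are only finitely many such triples, only finitely many integers $N$ can fail the desired conclusion, which is exactly the claim.

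The argument is largely bookkeeping once this setup is in place; the only genuinely subtle point is that the limiting ratio $\alpha_i/\alpha_j=\lim_{N\to\infty}\alpha_i'/\alpha_j'$ may itself be a root of unity, which would defeat a naive attempt to keep $\alpha_i'/\alpha_j'$ bounded away from the unit circle. Encoding the coincidence as the equation $N(\alpha_i-\zeta\alpha_j)=\zeta-1$ — so that each admissible $\zeta$ pins down at most one value of $N$ — is what circumvents this, and the finiteness of the roots of unity in $L$ is precisely what legitimizes that reduction. I therefore do not expect a real obstacle beyond organizing the case analysis carefully.
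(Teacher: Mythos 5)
Your proof is correct, and it takes a different route from the paper's. The paper argues by contradiction with a convergence argument: if infinitely many $N$ were bad, the ratios $\frac{\alpha+1/N}{\sigma(\alpha)+1/N}$ would form an infinite sequence of roots of unity of bounded degree, hence $J$th roots of unity for a fixed $J$; this set is discrete, the sequence converges (to $\alpha/\sigma(\alpha)$), and a non-constant convergent sequence in a discrete set is impossible. You instead make the finiteness quantitative and purely algebraic: the only roots of unity that can occur lie in the fixed splitting field $L$, hence form a finite set, and for each pair of distinct conjugates and each candidate $\zeta\neq 1$ the relation $N(\alpha_i-\zeta\alpha_j)=\zeta-1$ determines $N$ uniquely, while $\zeta=1$ is ruled out outright by distinctness of the conjugates. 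Both arguments rest on the same key input (roots of unity of bounded degree over $\QQ$ form a finite set), but yours buys an explicit bound on the number of exceptional $N$ (at most $d(d-1)$ times the number of roots of unity in $L$) and, more importantly, cleanly disposes of the case the paper's write-up leaves implicit: the paper must still rule out that the convergent sequence of roots of unity is eventually \emph{constant}, which requires exactly the linear-equation computation you perform. In that sense your version is the more complete of the two.
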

\begin{proof}
Suppose that there are infinitely many $N$ for which $N\alpha+1$ and $N\sigma(\alpha)+1$ differ in ratio by a root of unity, for some automorphism $\sigma$. Then there is an infinite sequence $\frac{\alpha+1/N}{\sigma(\alpha)+1/N}$ of roots of unity. All of these terms have bounded algebraic degree over $\QQ$, and hence they must all be $J$th roots of unity for some $J$. The set of $J$th roots of unity is discrete, and hence a non-constant convergent sequence of $J$th roots of unity is impossible. Thus, we get a contradiction and the desired result is obtained.
\end{proof}

\begin{proposition} Let $\phi:\MM_K\to \MM_\CC$ be a continuous ring endomorphism for which $\phi^*(n)=\phi^*(1)n$ for all $n\geq 1$. Then, $\sigma_n^{\phi}=\sigma_1^{\phi}$ for all $n\geq 1$.
\end{proposition}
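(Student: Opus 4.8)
The plan is to combine the linearity $\phi^{*}(n)=kn$ (write $k:=\phi^{*}(1)$) with the arithmetic input of Lemma \ref{preperation lemma}. Since $\sigma_n^{\phi},\sigma_1^{\phi}\in\Gal(K/\QQ)$, it suffices to prove $\sigma_n^{\phi}(\alpha)=\sigma_1^{\phi}(\alpha)$ for each $\alpha\in K$, and we may assume $\alpha\neq0$. By Lemma \ref{preperation lemma} fix an integer $N$ so large that no two distinct Galois conjugates of $\gamma:=N\alpha+1$ have ratio a root of unity; in particular the same holds for the conjugates of $\gamma^{k}$ (if $\tau_1(\gamma)^k/\tau_2(\gamma)^k$ were a root of unity then so would $\tau_1(\gamma)/\tau_2(\gamma)$ be). Because each $\sigma_m^{\phi}$ fixes $\ZZ$ pointwise and $N\neq0$, the equality $\sigma_n^{\phi}(\gamma)=\sigma_1^{\phi}(\gamma)$ is equivalent to $\sigma_n^{\phi}(\alpha)=\sigma_1^{\phi}(\alpha)$, so it is enough to work with $\gamma$.

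The next step is to record the basic power-sum identity. Applying $\phi$ to $[x-\gamma]$ and using $\phi^{*}(n)=kn$,
$$t_n\big(\phi([x-\gamma])\big)=\sigma_n^{\phi}\big(t_{kn}([x-\gamma])\big)=\sigma_n^{\phi}(\gamma)^{kn}=\theta_n^{\,n},\qquad\theta_n:=\sigma_n^{\phi}(\gamma)^{k},$$
where $\theta_n$ ranges over the finitely many Galois conjugates of $\gamma^{k}$ (no two of which have root-of-unity ratio). On the other hand $\phi([x-\gamma])$ is one fixed element $[f/g]$ of $\MM_{\CC}$, so $(\theta_n^{\,n})_{n\ge1}$ is literally its power-sum sequence: writing $\rho_1,\dots,\rho_r$ for the distinct nonzero roots of $fg$ and $m_j\in\ZZ\setminus\{0\}$ for the signed multiplicities, we get $\theta_n^{\,n}=\sum_{j}m_j\rho_j^{\,n}$ for all $n\ge1$, an integer-coefficient linear recurrence sequence with algebraic characteristic roots.

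The heart of the matter is then a purely formal claim: if a sequence $(\theta_n^{\,n})_{n\ge1}$ is an integer-coefficient linear recurrence sequence and the $\theta_n$ lie in a finite set no two elements of which have ratio a root of unity, then $\theta_n$ is constant. Granting this, $\theta_n^{\,n}=\theta^{\,n}$ for all $n$ for a single conjugate $\theta$ of $\gamma^{k}$; comparing exponential sums in $\theta^{\,n}=\sum_j m_j\rho_j^{\,n}$ (via the Vandermonde nonvanishing, as $r$ terms determine everything) forces $r=1$, $\rho_1=\theta$, $m_1=1$, so $\sigma_n^{\phi}(\gamma)^{k}=\theta$ for every $n$. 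Then $\sigma_n^{\phi}(\gamma)$ and $\sigma_1^{\phi}(\gamma)$ are conjugates of $\gamma$ with equal $k$-th powers, hence have ratio a root of unity, hence are equal by the choice of $N$, as desired. To prove the claim I would note that each level set $A_\xi=\{n:\theta_n=\xi\}$ is the zero set of the linear recurrence $n\mapsto\sum_j m_j\rho_j^{\,n}-\xi^{\,n}$, so by Skolem--Mahler--Lech is a finite union of arithmetic progressions up to a finite set; as finitely many $A_\xi$ cover $\NN$, the sequence $\theta_n$ is eventually periodic with some period $Q$. Expanding $\sum_n\theta_n^{\,n}x^n$ as a rational function and taking residues at the poles $\zeta/\xi$ with $\zeta^{Q}=1$, one finds the multiplicity forced at $x=1/\xi$ is $|\{r\bmod Q:\theta_r=\xi\}|/Q$ — here it is exactly the no-root-of-unity-ratio hypothesis that keeps poles attached to different values from colliding — and integrality of this multiplicity forces $\theta_n\equiv\xi$.

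The genuine obstacle is this claim: nothing prevents a non-constant periodic $\theta_n$ from making $(\theta_n^{\,n})$ a linear recurrence, and it is the integrality of the multiplicities $m_j$ — equivalently, that $\phi([x-\gamma])$ is an honest ratio of monic polynomials — together with the absence of root-of-unity ratios among conjugates of $\gamma^{k}$ that kills it. I expect the residue/integrality computation to be the clean finish once eventual periodicity is in hand; if one prefers to avoid Skolem--Mahler--Lech, eventual periodicity should instead be recoverable from a density-and-growth analysis of the finitely many level sets of $\theta_n$, comparing the geometric rate $|\theta_n|^{n}$ on each level set against the dominant characteristic root of the recurrence and invoking the almost-periodic lower bounds of Proposition \ref{Periodic} to rule out sustained cancellation — but making that comparison uniform across all level sets is the delicate point.
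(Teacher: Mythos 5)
Your proof is correct, but it reaches the conclusion by a genuinely different route from the paper's. Both arguments share the same skeleton at the two ends: reduce via Lemma \ref{preperation lemma} to an element (your $\gamma$) whose conjugates have no root-of-unity ratios, derive the exact identity $\sigma_n^{\phi}(\gamma)^{\phi^*(1)n}=\theta^n$ for a single $\theta$, and conclude that $\sigma_n^{\phi}(\gamma)/\sigma_1^{\phi}(\gamma)$ is a root of unity and hence equal to $1$. The middle is where you diverge. The paper works analytically: it writes $t_n(\phi([x-\alpha]))=S_nC^n+O(E^n)$ with $C$ the modulus of the dominant roots, takes logarithms of absolute values, uses the finiteness of the set of values $|\sigma_n^{\phi}(\alpha)|$ to force $\log|\sigma_n^{\phi}(\alpha)|=\log(C)/\phi^*(1)$ eventually, and then a mean-square estimate on $|S_n|$ (the average of $|S_n|^2$ being $<2$) together with integrality of the coefficients to force $S_n=\theta^n$. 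You instead argue arithmetically: Skolem--Mahler--Lech applied to the recurrences $n\mapsto\sum_j m_j\rho_j^{\,n}-\xi^{\,n}$, whose zero sets coincide with your level sets $A_\xi$ precisely because of the no-root-of-unity-ratio hypothesis, gives eventual periodicity of $n\mapsto\theta_n$ modulo some $Q$; the residue computation at $x=1/\xi$ (which I checked: the pole is simple, different values of $\xi$ contribute non-colliding poles by the same hypothesis, and the forced multiplicity is the number of residue classes carrying $\xi$ divided by $Q$) then uses integrality of the $m_j$ to show a single value occupies all residue classes, and the Vandermonde step upgrades ``eventually constant'' to ``constant for all $n\geq1$.'' Your version isolates a clean standalone lemma about linear recurrences whose $n$-th term is an $n$-th power drawn from a rigid finite set, and it replaces the paper's delicate asymptotic bookkeeping with an exact integrality argument; the cost is invoking Skolem--Mahler--Lech, a far deeper tool than anything the paper uses, and your proposed SML-free fallback is only gestured at. Two small points to make explicit in a write-up: choose $N$ so that $\gamma=N\alpha+1\neq0$ (so the ratios and the nonvanishing of $\xi$ make sense), and note that both proofs tacitly assume $\phi^*(1)\neq0$, consistent with the case split preceding the proposition.
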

\begin{proof} Choose $\alpha \in K$. We wish to show that $\sigma^\phi_n(\alpha)=\sigma^\phi_1(\alpha)$. By Proposition \ref{preperation lemma}, we can assume without loss of generality no Galois conjugates of $\alpha$ differ in ratio by root of unity. Considering where $\phi([x-\alpha])$ goes, we find that

$$\sigma^{\phi}_n(\alpha)^{\phi^*(1)n}=t_n(\phi([x-\alpha]))=S_n\cdot C^n+O(E^n),$$

where $C$ is a a real number, $E<C$, and $S_n=\sum_i c_i\cdot \theta_i^n$ is a weighted sum of complex numbers of the unit circle. Taking absolute values and a logarithm, we get that uniformly for $n$ such that $|S_n|\geq 6/5$

$$\log|\sigma_n^{\phi}(\alpha)|\cdot \phi^*(1)n=\log(C)\cdot n+\log|S_n|+O((E/C)^n).$$

Taking ratios, we get

$$\log|\sigma_n^{\phi}(\alpha)|=\frac{\log(C)}{\phi^*(1)}+\frac{\log|S_n|}{\phi^*(1)n}+O((E/C)^n).$$

Seeing as $\sigma_n^{\phi}(\alpha)$ go over a finite set as $n$ varies, the only way to be arbitrarily well approximated by $\log(C)/\phi^*(1)$ is if $\log|\sigma_n^{\phi}(\alpha)|=\frac{\log(C)}{\phi^*(1)}$ for large enough $n$. Hence, for large enough $n$ with $|S_n|\geq 6/5$ we have

$$\frac{\log|S_n|}{\phi^*(1)n}=O((E/C)^n).$$

Seeing as $\log |S_n|$ is bounded away from $0$, this can only happen for finitely many $n$. Hence, $|S_n|\geq 6/5$ for only finitely many $n$. Hence,

$$\lim_{N\to\infty}\frac{1}{N}\sum_{n=1}^{N}|S_n|^{2}=\sum_{i}|c_i|^2\leq (6/5)^2<2.$$

Seeing as the $|c_i|$ are integers, for $\sum_i |c_i|^2$ to be less than $2$ we must have $c_i=1$ for some $i$, and $c_i=0$ for all other $i$. Hence, $S_n=\theta^n$ for some $|\theta|=1$. In particular, we find that

$$\sigma^{\phi}_n(\alpha)^{\phi^*(1)n}=(\theta C)^{n}+O(E^n)$$

for all large enough $n$. Hence,

$$\sigma^{\phi}_n(\alpha)^{\phi^*(1)n}=(\theta C)^{n}$$

for all $n\geq 1$. Hence,

$$\left(\frac{\sigma_n^{\phi}(\alpha)}{\sigma_1^{\phi}(\alpha)}\right)^{\phi^*(1)n}=1$$

for all $n\geq 1$. By our assumption on $\alpha$, we thus get that $\sigma_n^{\phi}(\alpha)=\sigma_1^{\phi}(\alpha)$. We get the desired conclusion.
\end{proof}

Combining the two cases, we get the desired result.

\end{document}